\documentclass[a4paper,11pt]{article}
\usepackage{latexsym}
\usepackage{amssymb}
\usepackage{mathrsfs}
\usepackage{bm}
\usepackage{enumerate}
\usepackage{ stmaryrd }
\usepackage{xcolor}
\usepackage{commath}
\usepackage{array}
\usepackage{amsfonts}
\usepackage{amsmath,amsthm}
\usepackage{hyperref}
\usepackage{algorithm}
\usepackage{algorithmic}
\usepackage{framed}
\usepackage{boites}
\usepackage{graphicx}
\usepackage{cases}
\usepackage{comment}

\makeatletter
\def\moverlay{\mathpalette\mov@rlay}
\def\mov@rlay#1#2{\leavevmode\vtop{%
   \baselineskip\z@skip \lineskiplimit-\maxdimen
   \ialign{\hfil$\m@th#1##$\hfil\cr#2\crcr}}}
\newcommand{\charfusion}[3][\mathord]{
    #1{\ifx#1\mathop\vphantom{#2}\fi
        \mathpalette\mov@rlay{#2\cr#3}
      }
    \ifx#1\mathop\expandafter\displaylimits\fi}
\makeatother

\newcommand{\cupdot}{\charfusion[\mathbin]{\cup}{\cdot}}

 \textheight=21.5cm \textwidth=15cm
 \topmargin=-0.8cm
 \oddsidemargin=0.3cm \evensidemargin=0.3cm

\newenvironment{@abssec}[1]{%
    \if@twocolumn

      \section*{#1}%
    \else

      \vspace{.05in}\footnotesize
      \parindent .2in
 {\upshape\bfseries #1. }\ignorespaces
    \fi}

    {\if@twocolumn\else\par\vspace{.1in}\fi}

\newenvironment{keywords}{\begin{@abssec}{\keywordsname}}{\end{@abssec}}

\newenvironment{AMS}{\begin{@abssec}{\AMSname}}{\end{@abssec}}

\newcommand\keywordsname{Key words}
\newcommand\AMSname{AMS subject classifications}
\newcommand\AMname{AMS subject classification}
\newcommand\restr[2]{{
\left.\kern-\nulldelimiterspace 
#1 
\vphantom{|} 
\right|_{#2} 
}}
\newtheorem{theorem}{Theorem}[section]
\newtheorem{lemma}[theorem]{Lemma}
\newtheorem{corollary}[theorem]{Corollary}

\newtheorem{remark}[theorem]{Remark}

\newtheorem{conjecture}[theorem]{Conjecture}
\newtheorem{mainthm}{Theorem}

\newtheorem{thm}{Theorem}

\newcommand{\NN}{\mathbb{N}}

\newcommand{\RR}{\mathbb{R}}

\def\XXint#1#2#3{{\setbox0=\hbox{$#1{#2#3}{\int}$}
\vcenter{\hbox{$#2#3$}}\kern-.5\wd0}}

\newcommand{\link}{\mathop{\circ\kern-.35em -}}

\newcommand{\ol}{\overline}
\newcommand{\pa}{\partial}

\newcommand{\dv}{\mathop{\mathrm{div}}}

\newcommand{\gr}{\nabla}
\newcommand\jump[1]{\left\llbracket #1\right\rrbracket}

\newcommand{\al}{\alpha}
\newcommand{\be}{\beta}

\newcommand{\De}{\Delta}

\newcommand{\la}{\lambda}
\newcommand{\La}{\Lambda}

\newcommand{\te}{\theta}

\newcommand{\Om}{\Omega}
\newcommand{\rn}{{\mathbb{R}}^N}
\newcommand{\sg}{\sigma}

\newcommand\setbld[2]{\left\{ #1 \; :\; #2\right\}}

\newcommand{\tin}{{\text{in }}}
\newcommand{\ton}{{\text{on }}}

\newcommand{\tfor}{{\text{for }}}
\newcommand{\id}{{\rm Id}}

\newcommand{\cdottone}{{\boldsymbol{\cdot}}}

\newcommand{\cB}{\mathcal{B}}
\newcommand{\cC}{\mathcal{C}}

\newcommand{\cJ}{{\mathcal J}}

\newcommand{\cN}{{\mathcal N}}

\newcommand{\cX}{\mathcal{X}}

\numberwithin{equation}{section}

\title{Symmetry and asymmetry in a multi-phase \\ overdetermined problem
\thanks{This research was partially supported by JSPS KAKENHI (under grant nos. JP22K13935 and JP21KK0044, JP23H04459).
}
}

\author{Lorenzo Cavallina 
}
\date{}

\begin{document}

\maketitle

\begin{abstract}
A celebrated theorem of Serrin asserts that one overdetermined condition on the boundary is enough to obtain radial symmetry in the so-called one-phase overdetermined torsion problem. It is also known that imposing just one overdetermined condition on the boundary is not enough to obtain radial symmetry in the corresponding multi-phase overdetermined problem. In this paper, we show that, in order to obtain radial symmetry in the two-phase overdetermined torsion problem, two overdetermined conditions are needed. Moreover, it is noteworthy that this pattern does not extend to multi-phase problems with three or more layers, for which we show the existence of non-radial configurations satisfying countably infinitely many overdetermined conditions on the outer boundary. 
\end{abstract}

\begin{keywords}
multi-phase problem, transmission problem, overdetermined problem, free boundary problem, shape derivatives, implicit function theorem. 
\end{keywords}

\begin{AMS}
35N25, 35J15, 46G05, 47J07 
\end{AMS}

\pagestyle{plain}
\thispagestyle{plain}

\section{Introduction}

Let $\NN$ denote the set of positive integers. For some fixed number $m\in \NN$, let us introduce the problem setting and notation related to multi-phase ($m$-layered) elliptic overdetermined problems. 

Let $\Om_k$ ($k\in \{0,1,\dots, m\}$) be a collection of bounded domains of $\rn$ ($N\ge 2$) satisfying
\begin{equation*}
    \emptyset =: \Om_0 \subset\subset \Om_1 \subset \subset \dots \subset \subset \Om_m =:\Om, 
\end{equation*}
where $A\subset\subset B$ means that ``$A$ is compactly contained in $B$", that is, $\ol A\subset B$.
Also, for $k\in \{1,\dots, m\}$, we will assume that the sets $    D_k:=\Om_k\setminus \ol \Om_{k-1} $ are connected.
Moreover, let $\sg$ be the piece-wise constant function defined as 
\begin{equation*}
    \sg:= \sum_{k=1}^m \sg_k \cX_{D_k}, 
\end{equation*}
where $\sg_k$ are positive constants satisfying 
\begin{equation}\label{sg different}
\sg_{k-1}\ne \sg_{k}\quad  \tfor k\in \{2,\dots,m\}.    
\end{equation}
\begin{figure}[h]
\centering
\includegraphics[width=0.6\linewidth]{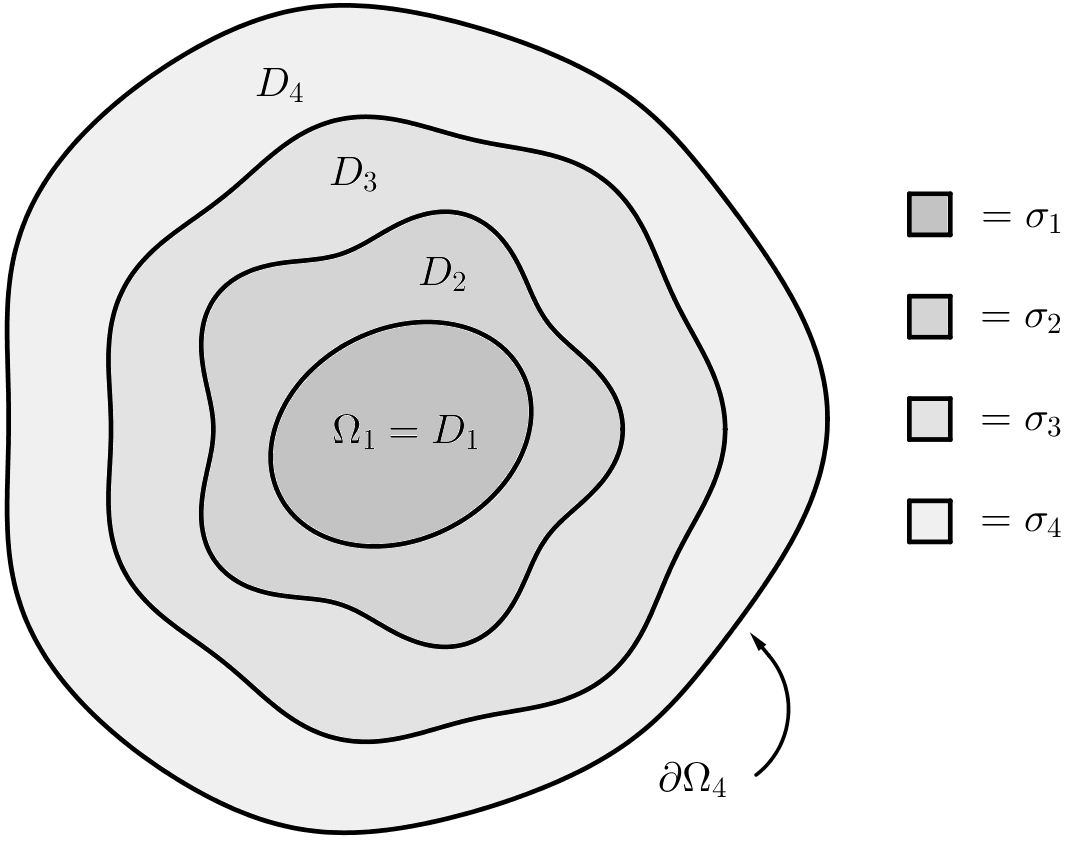}
\caption{Problem setting for $m=4$.} 
\label{fig pbsett}
\end{figure}

Finally, let $n$ denote the outward unit normal vector to $\Om_k$ ($k=1,\dots, m$) and $\pa_n$ be the corresponding normal derivative. Similarly, let $(\pa_n)^j$ denote the so-called, $j$th order normal derivative, which is given by the following expression:
\begin{equation*}
    (\pa_n)^j u(x):= \sum_{|\be|=j} \pa^\be u(x) n^\be(x), 
\end{equation*}
where the summation is taken over all multi-indices $\be=(\be_1,\dots, \be_N)$ of length $k$ and 
\begin{equation*}
    \pa^\be := \frac{\pa^j}{\pa x_1^{\be_1}\dots \pa x_N^{\be_N}},\quad n^\be =(n_1,\dots, n_N)^\be := n_1^{\be_1}\dots n_N^{\be_N}.
\end{equation*}
In this paper, we will consider the following boundary value problem, which will be referred to as the multi-phase torsion problem:
\begin{equation}\label{bvp}
  \begin{cases}
-\dv(\sg \gr u)=1\quad \tin \Om,\\
u=0\quad \ton \pa\Om.
  \end{cases}
\end{equation}
We recall that even when no additional smoothness assumptions are imposed on $\Om_k$, the weak solution to \eqref{bvp} is defined as the unique function $u\in H_0^1(\Om)$ satisfying
\begin{equation}\label{weak formulation}
    \int_\Om \sg \gr u\cdot \gr \varphi = \int_\Om \varphi \quad \text{for all }\varphi\in H_0^1(\Om).
\end{equation}

It is a well-known fact (see \cite{athastra96}) that, under suitable smoothness assumptions, the boundary value problem \eqref{bvp} can be rewritten as the following \emph{transmission problem} (the name \emph{diffraction problem} is also used, see \cite[Chapter 16]{LU}):
\begin{equation}\label{transmission problem}
    \begin{cases}
       -\sg_k \De u =1\quad \tin D_k \quad (k=1,\dots, m),\\
       \jump{u}=0 \quad \ton \pa\Om_k \quad (k=1,\dots, m-1),\\ 
       \jump{\sg\pa_n u}=0 \quad \ton \pa\Om_k \quad (k=1,\dots, m-1),\\ 
       u=0\quad \ton \pa\Om_m,
    \end{cases}
\end{equation}
where the quantity $\jump{\cdottone}$, the \emph{jump} through the interface $\pa\Om_k$, is defined as follows: for any function $f\in H^1(\Om_m)$, we set
\begin{equation*}
    \jump{f}:= \restr{f_{k+1}}{\pa\Om_k}-\restr{f_{k}}{\pa\Om_k} \quad \ton \pa\Om_k,
\end{equation*}
where $f_j:= \restr{f}{D_j}$ ($j=1, \dots, m$).

If the domains $\Om_k$ are concentric balls then, by unique solvability, there exist real constants $\{c_k\}_{k\in \NN}$ such that the solution $u$ of \eqref{bvp} satisfies $(\pa_n)^k u \equiv c_k$ on $\pa\Om$ for all $k$. The aim of this paper is to investigate to what extent the reverse implication holds. It may not be surprising to know that the answer depends on the number of layers $m$. 

The case $m=1$ was solved by Serrin. By adapting the famous \emph{reflection principle} of Alexandrov (see \cite{Ale1958} and Theorem \ref{thm soap bubble} of page \pageref{thm soap bubble}), he showed the following symmetry result: 

\begin{thm}[\cite{Se1971}]\label{thm serrin}
Let $m=1$. Problem \eqref{bvp} admits a solution $u\in C^1(\ol \Om)\cap C^2(\Om)$ satisfying $\pa_n u\equiv c$ on $\pa\Om$ for some constant $c\in\RR$ if and only if $\Om$ is a ball. 
\end{thm}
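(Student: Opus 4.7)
The reverse implication is immediate by direct computation on the ball $\Omega = B_R(x_0)$, where the radial formula $u(x) = (R^2 - |x-x_0|^2)/(2N\sg_1)$ yields $\pa_n u \equiv -R/(N\sg_1)$. For the nontrivial forward implication, the plan is to execute Serrin's method of moving planes. I would fix an arbitrary direction $\nu \in \SS^{N-1}$ and, for each $t \in \RR$, introduce the hyperplane $T_t := \{x : x \cdot \nu = t\}$, the cap $\Om_t := \Om \cap \{x : x \cdot \nu > t\}$, and the reflection $\Om_t'$ of $\Om_t$ across $T_t$. For $t$ large, $\Om_t$ is empty; as $t$ decreases, one continues the reflection as long as $\Om_t' \subset \Om$, stopping at the first critical value $t^*$ at which either (i) $\pa \Om_{t^*}'$ is internally tangent to $\pa \Om$ at some $P \notin T_{t^*}$, or (ii) $T_{t^*}$ is orthogonal to $\pa \Om$ at some $P \in T_{t^*} \cap \pa \Om$.

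Next I would introduce the reflected solution $v(x) := u(x')$ on $\Om_{t^*}'$, where $x'$ is the reflection of $x$ across $T_{t^*}$. Since $\De$ commutes with reflections, $v$ solves the same PDE as $u$, and hence $w := v - u$ is harmonic in $\Om_{t^*}'$. On the reflected portion of $\pa \Om_{t^*}'$ one has $v = 0$ while $u > 0$ in the interior of $\Om$ by the strong maximum principle, so $w \le 0$ there; and on $T_{t^*} \cap \ol \Om$ one has $w = 0$ by symmetry. The weak maximum principle thus gives $w \le 0$ throughout $\Om_{t^*}'$.

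At this point the overdetermined Neumann condition enters crucially. In case (i), $P$ belongs to both $\pa \Om$ and $\pa \Om_{t^*}'$ with coinciding outward unit normals; writing $v = u \circ R$ where $R$ is the reflection in $T_{t^*}$, the chain rule yields $\pa_n v(P) = \pa_n u(P') = c = \pa_n u(P)$, so that $\pa_n w(P) = 0$. Since $w(P) = 0$ is also the maximum of $w$ on $\ol{\Om_{t^*}'}$, Hopf's boundary point lemma would force $\pa_n w(P) > 0$ unless $w \equiv 0$. Hence $w \equiv 0$, and unique continuation yields that $\Om$ is symmetric with respect to $T_{t^*}$.

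The harder situation is case (ii), where $P$ is a corner of $\Om_{t^*}'$ (the tangent plane to $\pa \Om$ at $P$ contains $\nu$) and the ordinary Hopf lemma does not apply. This is the central technical obstacle of the argument, and it is handled by Serrin's second-order boundary point lemma for corners, which produces the same contradiction via a careful Taylor expansion using the full overdetermined information $u = 0$ and $\pa_n u \equiv c$ on $\pa \Om$ to kill the relevant first- and second-order terms. In either case one concludes that $\Om$ is symmetric with respect to $T_{t^*}$. Since $\nu \in \SS^{N-1}$ was arbitrary, $\Om$ is symmetric across a hyperplane of every direction, which forces $\Om$ to be a ball.
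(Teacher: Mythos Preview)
Your sketch is a faithful outline of Serrin's original moving-planes argument (including the corner refinement of the Hopf lemma for case (ii)), and there is no genuine gap in it as a proof plan. Note, however, that the paper does not supply its own proof of this statement: Theorem~\ref{thm serrin} is simply quoted from \cite{Se1971} as a known result and used as input elsewhere, so there is nothing in the paper to compare your argument against beyond the citation itself.
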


We refer the interested reader to the survey paper \cite{Magna aswr} for an overview of some qualitative and quantitative results related to Theorems \ref{thm serrin} and \ref{thm soap bubble}.

As shown by the author and Yachimura by making use of a perturbation 
 method relying on the implicit function theorem, \emph{one} overdetermined condition is not enough to obtain symmetry when $m=2$. 

\begin{thm}[\cite{CY1}]\label{thm CY}
Let $m=2$. Then, there exist infinitely many pairs of domains $\Om_1\subset\subset\Om_2$ that are not concentric balls but such that the solution $u$ of \eqref{bvp} satisfies $\pa_n u\equiv c$ on $\pa\Om_2$ for some constant $c\in\RR$.
\end{thm}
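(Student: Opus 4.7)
The plan is to perform a perturbation argument about the radial configuration of concentric balls, exploiting the implicit function theorem through a standard shape-derivative scheme.

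Fix $0<r_1<r_2$ and parametrise nearby pairs of domains by functions $\phi_1,\phi_2\in C^{2,\al}(\SS^{N-1})$ close to zero, setting
$$
\Om_k^{\phi_k}:=\bigl\{(1+\phi_k(\om))\,r_k\,\om:\om\in\SS^{N-1}\bigr\},\qquad k=1,2.
$$
Let $u^{(\phi_1,\phi_2)}$ denote the corresponding solution of \eqref{bvp}, and consider the nonlinear map
$$
F(\phi_1,\phi_2):=\bigl(\pa_n u^{(\phi_1,\phi_2)}\bigr)\circ\Phi^{\phi_2}-\dashint_{\pa\Om_2^{\phi_2}}\pa_n u^{(\phi_1,\phi_2)},
$$
where $\Phi^{\phi_2}:\SS^{N-1}\to\pa\Om_2^{\phi_2}$ is the canonical radial parametrisation. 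By construction $F$ takes values in the closed mean-zero subspace $C^{1,\al}(\SS^{N-1})_0$, the equation $F=0$ is equivalent to the desired overdetermined condition, and radial symmetry forces $F(0,0)=0$. I would then apply the implicit function theorem at $(0,0)$ to solve for $\phi_2$ as a function of $\phi_1$.

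The central step is to compute the partial Fréchet derivative $L:=\pa_{\phi_2}F(0,0)$ and show that $L\colon C^{2,\al}(\SS^{N-1})_0\to C^{1,\al}(\SS^{N-1})_0$ is an isomorphism. A standard Hadamard-type calculation produces, for a normal velocity $\psi$ on $\pa B_{r_2}$, an associated shape derivative $u'=u'[\psi]$ solving the purely harmonic transmission problem
\begin{equation*}
\begin{cases}
\De u'=0 & \tin B_{r_1}\cup (B_{r_2}\setminus\ol{B_{r_1}}),\\
\jump{u'}=\jump{\sg\pa_n u'}=0 & \ton \pa B_{r_1},\\
u'=-\psi\,\pa_n u & \ton \pa B_{r_2},
\end{cases}
\end{equation*}
while $L\psi=\bigl(\pa_n u'+\psi\,(\pa_n)^2 u\bigr)\big|_{\pa B_{r_2}}$ modulo its mean. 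Since the base configuration is radial, $L$ diagonalises in the basis of spherical harmonics: expanding $\psi=\sum_{\ell\ge 1}\psi_\ell$ and writing $u'$ mode by mode using the radial profiles $r^\ell$ and $r^{2-N-\ell}$, the interface and outer-Dirichlet conditions yield an explicit eigenvalue $\la_\ell=\la_\ell(r_1,r_2,\sg_1,\sg_2,N)$.

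The main technical obstacle will be verifying $\la_\ell\ne 0$ for every $\ell\ge 1$. If \eqref{sg different} were violated (so $\sg_1=\sg_2$), the problem would degenerate into Serrin's one-phase setting and $\ker L$ would contain the $N$-dimensional space of degree-one spherical harmonics, reflecting the translation invariance of Serrin's problem; the hypothesis \eqref{sg different} is precisely what breaks this symmetry by anchoring the inner interface to a concentric ball, so nonvanishing should hold. For each $\ell\ge 1$, $\la_\ell$ is a concrete rational expression whose nonvanishing can be verified directly, together with an asymptotic $\la_\ell\asymp\ell$ as $\ell\to\infty$ needed to make $L$ a Fredholm isomorphism between the stated Hölder spaces. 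Granted this, the implicit function theorem delivers a smooth map $\phi_1\mapsto\phi_2(\phi_1)$ with $F(\phi_1,\phi_2(\phi_1))\equiv 0$, and evaluating along any one-parameter family $t\mapsto tY_\ell$ generated by a non-constant spherical harmonic (infinitely many choices are available) produces the desired infinite family of non-radial pairs $(\Om_1^{\phi_1},\Om_2^{\phi_2(\phi_1)})$ solving the overdetermined two-phase torsion problem.
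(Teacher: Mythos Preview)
The paper does not itself prove Theorem~\ref{thm CY}; it is quoted from \cite{CY1} with the one-line description ``a perturbation method relying on the implicit function theorem.'' Your proposal follows exactly this strategy and uses the very machinery the present paper develops in Sections~\ref{s3}--\ref{s5} for Theorem~\ref{mainthm II}: shape derivatives of the two-phase solution, spherical-harmonic diagonalisation of the linearised operator through the two-phase Dirichlet-to-Neumann map~$\cN$ (Lemma~\ref{lem eigenvalues}), Fredholmness via Lemmas~\ref{id+N is a bijection}--\ref{N is a bijection}, and the implicit function theorem. So the approach is the intended one.

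One technical point deserves more care than you give it. Normalising $r_2=1$, $\sg_2=1$, your linearisation is not a scalar multiple of $\cN$ alone: since $(\pa_n)^2 u=-\tfrac1N$ on $\pa B_1$ does not vanish, one finds $L=\tfrac1N(\cN-\id)$ on mean-zero functions. (Contrast this with the proof of Theorem~\ref{mainthm II}, where the gluing construction forces the second-derivative terms to cancel and leaves a pure multiple of $\cN$.) Invertibility of $L$ thus requires $\la_k^{\cN}\ne 1$ for every $k\ge 1$, not merely $\la_k^{\cN}\ne 0$. From Lemma~\ref{lem eigenvalues} one checks that $\la_1^{\cN}\ne 1$ whenever $\sg_1\ne 1$ (this is precisely your translation-breaking heuristic), but for $k\ge 2$ the equation $\la_k^{\cN}=1$ is equivalent to
\[
(k-1)(N-2+k+k\sg_1)\,R^{2-N-2k}=k(1-\sg_1)(N+k-1),
\]
which admits a solution $R\in(0,1)$ exactly when $\sg_1<1/k$. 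Hence for small $\sg_1$ there are finitely many exceptional ratios $R=r_1/r_2$ at which your linearised operator acquires a kernel. Your opening ``Fix $0<r_1<r_2$'' should therefore read ``choose $0<r_1<r_2$ avoiding these finitely many exceptional values''; with that qualification the argument goes through and yields the desired non-radial families.
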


Regarding the study of nontrivial solutions to the above two-phase overdetermined problem, further analysis has been carried out concerning local bifurcation  (\cite{CYisaac}) and stability (\cite{CPY}) respectively. 

The difference in behaviors between one-phase and two-phase elliptic overdetermined problems presented by Theorems \ref{thm serrin} and \ref{thm CY} admits the following heuristic interpretation. 
The one-phase overdetermined problem of Theorem \ref{thm serrin} has one constraint (the overdetermined condition) and one degree of freedom (the shape of $\Om$). On the other hand, the two-phase overdetermined problem of Theorem \ref{thm CY} also has one constraint (the overdetermined condition) but two degrees of freedom (the shapes of both $\Om_1$ and $\Om_2$). In other words, Theorem \ref{thm CY} shows that, when the number of degrees of freedom exceeds that of constraints, the overdetermined problem admits nontrivial solutions. 

By combining the soap bubble theorem of Alexandrov \cite{Ale1958} and a symmetry result by Sakaguchi \cite{Sak bessatsu}, we obtain the following result:

\begin{mainthm}\label{mainthm I}
Let $m=2$ and let $\Om_1\subset\subset\Om_2$ be bounded domains of class $C^2$ such that $D_1:=\Om_2\setminus\ol\Om_1$ is connected. Then, problem \eqref{bvp} admits a solution $u$ of class $C^2$ in a neighborhood of $\pa\Om_2$ satisfying $(\pa_n)^k u\equiv c_k$ on $\pa\Om_2$ ($k=1,2$) for some constants $c_1,c_2\in\RR$ if and only if $(\Om_1, \Om_2)$ are concentric balls. 
\end{mainthm}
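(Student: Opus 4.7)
The ``if'' direction is immediate from the radial symmetry of the unique solution to \eqref{bvp} on concentric balls, so I will sketch the ``only if'' direction. The plan is to combine Alexandrov's soap bubble theorem (Theorem~\ref{thm soap bubble}) with a symmetry result of Sakaguchi \cite{Sak bessatsu}, as already hinted in the statement itself: the former will force $\Omega_2$ itself to be a ball, and the latter will then propagate the symmetry inward to $\Omega_1$.

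\textbf{Extracting constant mean curvature on $\partial\Omega_2$.} This is the main step, in which both overdetermined conditions enter. Since $u$ is of class $C^2$ in a neighborhood of $\partial\Omega_2$ and solves $-\sigma_2\Delta u=1$ inside the outer layer $D_2=\Omega_2\setminus\ol{\Omega_1}$, this identity extends continuously up to $\partial\Omega_2$. On any $C^2$ hypersurface one has the classical tangent/normal decomposition
\[
\Delta u \;=\; \Delta_\tau u \,+\, H\,\partial_n u \,+\, (\partial_n)^2 u,
\]
where $\Delta_\tau$ is the Laplace--Beltrami operator and $H$ is the sum of the principal curvatures of $\partial\Omega_2$ with the outward-normal sign convention. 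The Dirichlet condition $u\equiv 0$ on $\partial\Omega_2$ annihilates the tangential term, and inserting the two overdetermined conditions then yields the pointwise identity
\[
-\tfrac{1}{\sigma_2} \;=\; c_1\,H \,+\, c_2 \qquad \text{on } \partial\Omega_2.
\]
A short application of Hopf's lemma (combined with $u>0$ in $\Omega_2$, which follows from \eqref{weak formulation} tested against $\varphi=u^-$) gives $c_1=\partial_n u<0$ on $\partial\Omega_2$, hence $c_1\ne 0$, and therefore $H$ must be a constant function on $\partial\Omega_2$.

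\textbf{Conclusion via two classical symmetry results.} Since $\partial\Omega_2$ is a compact embedded $C^2$ hypersurface of constant mean curvature, Alexandrov's soap bubble theorem \cite{Ale1958} shows that $\partial\Omega_2$ is a sphere, i.e.\ $\Omega_2$ is a ball. It remains to propagate the symmetry inward: with $\Omega_2$ now known to be a ball, Sakaguchi's symmetry result \cite{Sak bessatsu}, tailored to the two-phase setting, asserts that the single remaining condition $\partial_n u\equiv c_1$ on the spherical boundary $\partial\Omega_2$ already forces $\partial\Omega_1$ to be a sphere concentric with $\partial\Omega_2$. The essential difficulty and novelty sit in the curvature identity above, whose role is precisely to encode the new geometric information provided by the second-order overdetermined condition (compared with the single first-order condition of the one-phase Theorem~\ref{thm serrin}); once this identity is in place, the theorem reduces to invoking the two classical symmetry results and verifying the mild technical point $c_1\ne 0$.
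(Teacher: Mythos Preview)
Your proof is correct and follows exactly the paper's approach: decompose the Laplacian on $\partial\Omega_2$ to obtain $-1/\sigma_2 = c_2 + H c_1$, deduce constant mean curvature, then apply Alexandrov followed by Sakaguchi. Your explicit verification that $c_1\ne 0$ via Hopf's lemma is a welcome addition that the paper leaves implicit; conversely, the paper is slightly more careful in applying Alexandrov componentwise to rule out the a~priori possibility that $\partial\Omega_2$ has several connected components.
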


In light of this result, together with Theorem \ref{thm serrin}, one might be tempted to formulate the following (false!) conjecture.

\begin{conjecture}[False]\label{false}
Let $m\in \NN$ and let $\Om_k$, $k\in \{1,\dots, m\}$, be as in the introduction. Then, problem \eqref{bvp} admits a solution $u$ of class $C^m$ is a neighborhood of $\pa\Om_m$ satisfying 
\begin{equation*}
(\pa_n)^k u\equiv c_k \quad \ton \pa\Om_m \quad (k=1,2,\dots, m)    
\end{equation*}
 for some constants $c_k\in\RR$ if and only if the sets $\Om_k$ are concentric balls.    
\end{conjecture}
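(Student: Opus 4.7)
The plan is to disprove the conjecture for every $m \geq 3$ by explicit construction. The key observation is that if $\Om_m$ is a ball and the solution $u$ happens to be radially symmetric in a neighborhood of $\pa\Om_m$, then $(\pa_n)^k u$ is automatically constant on $\pa\Om_m$ for every $k \in \NN$, giving countably infinitely many overdetermined conditions -- far more than the $m$ required by the conjecture. So it suffices to produce, for each $m \geq 3$, a non-concentric configuration $(\Om_1, \dots, \Om_m)$ whose solution $u$ is radial in the outermost annulus $D_m$.

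For $m = 3$ the idea is to nest the two-phase counterexample of Theorem \ref{thm CY} inside a concentric outer shell. Fix an origin $o \in \rn$ and take $\Om_3$ and $\Om_2$ to be concentric balls centered at $o$ with radii $r_2 < r_3$. By Theorem \ref{thm CY} applied to the two-phase data $(\sg_1, \sg_2)$ on the outer ball $\Om_2$, one obtains a non-concentric $\Om_1 \subset\subset \Om_2$ for which the two-phase solution $w$ of
\[
  -\dv(\tilde\sg \gr w) = 1 \text{ in } \Om_2, \qquad w = 0 \text{ on } \pa\Om_2,
\]
with $\tilde\sg := \sg_1 \cX_{\Om_1} + \sg_2 \cX_{\Om_2 \setminus \ol{\Om_1}}$, satisfies $\pa_n w \equiv C$ on $\pa\Om_2$ for some $C \in \RR$. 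For any constant $K$, the shifted function $w + K$ still solves this inner transmission problem, and has trace $K$ and normal derivative $C$ on $\pa\Om_2$. Glue this to the unique radial function $v_K$ on the annulus $D_3$ solving $-\sg_3 \Delta v_K = 1$ with $v_K = K$ on $\pa\Om_2$ and $v_K = 0$ on $\pa\Om_3$, and set $u := w + K$ in $\Om_2$ and $u := v_K$ in $D_3$. The Dirichlet transmission $\jump{u} = 0$ on $\pa\Om_2$ holds by construction, and the remaining Neumann transmission reduces to the single scalar equation $\sg_2 C = \sg_3 \pa_n v_K|_{\pa\Om_2}$ in the unknown $K$.

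The main (and essentially only) obstacle is to verify that this scalar matching equation is solvable. By linearity of the outer problem in $K$, one has $\pa_n v_K|_{\pa\Om_2} = \alpha K + \beta$, where $\alpha$ is the radial derivative at $r = r_2$ of the harmonic interpolation that equals $1$ on $\{|x| = r_2\}$ and $0$ on $\{|x| = r_3\}$; this $\alpha$ is elementary to compute and nonzero. Thus $K$ is uniquely determined, the resulting $u$ is the weak solution of \eqref{bvp}, and since $u|_{D_3} = v_K$ is radial we obtain $(\pa_n)^k u \equiv c_k$ on $\pa\Om_3$ for every $k \in \NN$. Meanwhile $\Om_1$ is not concentric with $\Om_2$ or $\Om_3$, contradicting the conjecture.

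For $m \geq 4$, interpose further concentric balls $\Om_3 \subset\subset \cdots \subset\subset \Om_m$ all centered at $o$, with $\{\sg_k\}_{k=3}^m$ chosen to respect \eqref{sg different}. Since the geometry outside $\Om_2$ is rotationally symmetric and the data on $\pa\Om_2$ passed out from the inner two-phase block is constant, $u$ is radial in each shell $D_k$ with $k \geq 3$; the free parameters in each of these radial annuli together with $K$ are determined by a square linear system of interface-matching conditions, uniquely solvable by perturbation from the fully concentric reference configuration. This yields a non-concentric configuration satisfying countably infinitely many overdetermined conditions on $\pa\Om_m$ for every $m \geq 3$, disproving the conjecture.
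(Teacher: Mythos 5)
Your overall strategy---force $u$ to be radial in the outermost layers, so that \eqref{infinite overdetermination} holds for free---is exactly the strategy of the paper. However, the first and decisive step of your construction is not available. You invoke Theorem \ref{thm CY} to produce, inside a \emph{fixed ball} $\Om_2$, a non-concentric inclusion $\Om_1$ whose two-phase solution $w$ has constant normal derivative on $\pa\Om_2$. Theorem \ref{thm CY} does not assert this: in the counterexamples of \cite{CY1} \emph{both} $\Om_1$ and $\Om_2$ are perturbed away from concentric balls, and in particular $\Om_2$ is not a ball. Worse, the configuration you need provably cannot exist: by Sakaguchi's Theorem \ref{thm bessatsu}, if $\Om_2$ is a ball, $\Om_1$ has finitely many connected components, $D_1=\Om_2\setminus\ol\Om_1$ is connected, and $\pa_n w\equiv C$ on $\pa\Om_2$, then $(\Om_1,\Om_2)$ \emph{must} be concentric balls. (This is precisely the mechanism behind Theorem \ref{mainthm I}.) Nor can you fall back on the genuine counterexamples of Theorem \ref{thm CY}, since their outer boundary $\pa\Om_2$ is non-spherical and a nonconstant radial function on the outer annulus cannot then attain a constant trace on it, so the gluing fails. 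The $m\ge 4$ case inherits the same gap.

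The paper's proof of Theorem \ref{mainthm II} circumvents exactly this obstruction: it keeps $\Om_3,\dots,\Om_m$ as concentric balls but perturbs \emph{both} $\pa\Om_1$ and $\pa\Om_2$ into $\pa D_\eta$ and $\pa\Om_\xi$, prescribes the radial reference value $v_0$ as Dirichlet data on the \emph{perturbed} interface $\pa\Om_\xi$, and then solves the Neumann-matching condition $\pa_{n_\xi}v_{\xi,\eta}=\sg_3\,\pa_{n_\xi}v_0$ on $\pa\Om_\xi$ for $\xi$ as a function of $\eta$ via the implicit function theorem. Note that this is not a constant-Neumann condition: $v_0$ is radial, but its normal derivative on the non-spherical surface $\pa\Om_\xi$ is not constant, which is what lets the construction escape Theorem \ref{thm bessatsu}. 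The key analytic input is the invertibility of the linearization, which reduces (after the cancellations in $\pa_\xi\Psi(0,0)$, using $\sg_3\ne\sg_2$) to the bijectivity of the two-phase Dirichlet-to-Neumann map $\cN$ on zero-average H\"older spaces (Lemmas \ref{lem eigenvalues}--\ref{N is a bijection}). If you want to repair your argument, you would need to replace the citation of Theorem \ref{thm CY} by this (or an equivalent) perturbation argument in which the second interface is also a free unknown.
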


We will show that Conjecture \ref{false} does not hold for $m\ge 3$. As a matter of fact, we are able to exhibit a counterexample as follows.

\begin{mainthm}\label{mainthm II}
    Let $m\ge 3$. Then, for all $\sg_1,\dots, \sg_m>0$ satisfying \eqref{sg different} there exist infinitely many domains $\Om_1\subset \subset\dots \subset\subset \Om_m$ where $(\Om_1, \Om_2)$ are not concentric balls but such that the solution $u$ of \eqref{bvp} satisfies
    \begin{equation}\label{infinite overdetermination}
(\pa_n)^k u\equiv c_k \quad \ton \pa\Om_m \quad \forall k\in \NN    
\end{equation}
 for some constants $c_k\in\RR$. 
\end{mainthm}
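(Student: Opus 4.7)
The strategy is to reduce to the two-phase setting of Theorem \ref{thm CY} by arranging the configuration so that the solution $u$ of \eqref{bvp} is radially symmetric on the outer annular region $\Om_m \setminus \ol\Om_2$. Since any radial function has each iterated normal derivative equal to a constant on a concentric sphere, this yields \eqref{infinite overdetermination} at once. First, invoke Theorem \ref{thm CY} to produce a non-concentric pair $(\Om_1,\Om_2)$ whose two-phase torsion solution $u^*$ satisfies $\pa_n u^*\equiv c$ on $\pa\Om_2$ for some $c\in\RR$; one may arrange that $\Om_2=B_{r_2}$ is a ball centered at the origin, since the perturbative implicit function argument of \cite{CY1} leaves the outer boundary unchanged. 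Then fix arbitrary radii $r_2<r_3<\dots<r_m$ and set $\Om_k:=B_{r_k}$ for $k=3,\dots,m$, obtaining a valid $m$-layer configuration in which $(\Om_1,\Om_2)$ are not concentric balls.

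The candidate solution on $\Om_m$ will be built by gluing the shifted two-phase solution $u^*+\al$ on $\Om_2$ to a piecewise-radial function $w$ on $\Om_m\setminus\ol\Om_2$, where $\al\in\RR$ is to be determined. In each annulus $D_k$ with $k\ge 3$, $w$ must take the explicit form $w(r)=-\frac{r^2}{2N\sg_k}+A_kG(r)+B_k$, with $G(r)=\log r$ when $N=2$ and $G(r)=r^{2-N}$ when $N\ge 3$. The transmission conditions at the $m-3$ interior spherical interfaces, the Dirichlet condition $w(r_m)=0$, and the Neumann-type matching $\sg_3 w'(r_2)=\sg_2 c$ (forced by the transmission at $\pa\Om_2$ together with $\pa_n u^*\equiv c$) form a square linear system in the unknowns $(A_k,B_k)_{k=3}^m$, uniquely solvable by standard radial (Sturm--Liouville) ODE theory. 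Setting $\al:=w(r_2)$ then ensures continuity across $\pa\Om_2$, since $u^*\equiv 0$ there.

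It remains to verify that
\begin{equation*}
u:=(u^*+\al)\,\cX_{\Om_2}+w\,\cX_{\Om_m\setminus\ol\Om_2}
\end{equation*}
lies in $H_0^1(\Om_m)$ and satisfies the weak formulation \eqref{weak formulation}. Since $u$ is continuous across $\pa\Om_2$ and the transmission conditions $\jump{u}=0$, $\jump{\sg\pa_n u}=0$ hold on every interface by construction (at $\pa\Om_1$ because $u^*$ solves the two-phase transmission problem, at $\pa\Om_2$ by the choice of $\al$ and the matching of the Neumann data, and at the outer interfaces because $w$ was built to satisfy them), an integration by parts in each $D_k$ shows that $u$ is indeed the unique weak solution of \eqref{bvp}. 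Because $u\equiv w$ is radial in a neighborhood of $\pa\Om_m$, every $(\pa_n)^k u$ is constant on $\pa\Om_m$, establishing \eqref{infinite overdetermination}. Infinitely many distinct $m$-layer examples arise from varying the radii $r_3,\dots,r_m$ (or the underlying pair from Theorem \ref{thm CY}).

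The main delicate point is confirming that Theorem \ref{thm CY} may be invoked with $\Om_2$ kept as a ball, which should be read off directly from the construction in \cite{CY1}. The subsequent radial matching and the weak-solution verification are essentially algebraic.
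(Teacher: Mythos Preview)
Your argument contains a fatal gap at the very first step. You claim that Theorem~\ref{thm CY} can be invoked with $\Om_2$ kept equal to a ball $B_{r_2}$, asserting that ``the perturbative implicit function argument of \cite{CY1} leaves the outer boundary unchanged.'' This is false, and in fact impossible: it is directly ruled out by Sakaguchi's result, Theorem~\ref{thm bessatsu}. That theorem says precisely that if $m=2$, $\Om_2$ is a ball, and the solution of \eqref{bvp} satisfies $\pa_n u\equiv c$ on $\pa\Om_2$, then $(\Om_1,\Om_2)$ must be concentric balls. Hence no non-concentric pair $(\Om_1,\Om_2)$ with $\Om_2$ a ball can satisfy the overdetermined Neumann condition, and the two-phase input $u^*$ you need simply does not exist. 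In the actual construction of \cite{CY1} it is the \emph{outer} boundary $\pa\Om_2$ that is perturbed away from a sphere; this is essential, not incidental.

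Once $\Om_2$ is not a ball, the rest of your scheme collapses as well: you cannot build a radial function $w$ on $\Om_m\setminus\ol\Om_2$ and match $w(r_2)$ and $\sg_3 w'(r_2)$ along $\pa\Om_2$, because $\pa\Om_2$ is no longer a level set of $r$. The paper circumvents exactly this obstruction. It keeps $\Om_3,\dots,\Om_m$ as concentric balls, perturbs \emph{both} $\Om_1$ and $\Om_2$ to $D_\eta$ and $\Om_\xi$, and solves inside $\Om_\xi$ a two-phase problem with inhomogeneous Dirichlet data equal to the fixed radial function $v_0$ (rather than $0$). The matching condition is then $\pa_{n_\xi} v_{\xi,\eta}=\sg_3\,\pa_{n_\xi} v_0$ on the \emph{non-spherical} interface $\pa\Om_\xi$, and an implicit function argument in $\xi$ (with $\eta$ as the free parameter) produces the required nontrivial pairs. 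The freedom to deform $\pa\Om_2$ is what makes the linearized operator---the two-phase Dirichlet-to-Neumann map $\cN$---invertible, and it is precisely the ingredient your reduction tries to dispense with.
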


This paper is organized as follows. In Section \ref{s2}, we show Theorem \ref{mainthm I} while the subsequent sections are devoted to the proof of Theorem \ref{mainthm II}. In Section \ref{s3}, we introduce some preliminary results concerning the shape differentiability of state functions in two-phase problems. In Section \ref{s4}, we study the invertibility properties of a linearized operator (Dirichlet-to-Neumann map). Then, in Section \ref{s5} we combine the results of the two preceding sections to give a proof of Theorem \ref{mainthm II} by means of the implicit function theorem. Finally, Section \ref{s6} is devoted to some comments on the proof of Theorem \ref{mainthm II} and how it relates to the existing literature.

\section{Proof of Theorem \ref{mainthm I}}\label{s2}

In this section, we will give a simple proof of Theorem \ref{mainthm I}. To this end, we will need the following symmetry results. 

\begin{thm}[\cite{Ale1958}]\label{thm soap bubble}
A compact hypersurface, embedded in $\rn$, that has constant mean curvature must be a sphere.     
\end{thm}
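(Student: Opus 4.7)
The plan is to prove Alexandrov's soap bubble theorem via the \emph{method of moving planes}, which Alexandrov introduced in \cite{Ale1958} precisely for this purpose. Since $\Si$ is compact and embedded in $\rn$, by the Jordan--Brouwer separation theorem it bounds a bounded open set $\Om\subset\rn$, and we orient $H$ with respect to the outward unit normal. For each unit vector $e\in\SS^{N-1}$ and $t\in\RR$ introduce the hyperplane $\pi_t:=\{x\in\rn : x\cdot e = t\}$, the cap $\Si_t^+:=\Si\cap\{x\cdot e>t\}$, and its reflection $\Si_t^*$ across $\pi_t$. For $t$ sufficiently large $\pi_t$ misses $\Si$; as $t$ decreases it first touches $\Si$ at $t_0=\max\{x\cdot e:x\in\Si\}$, and for $t$ slightly below $t_0$ the reflected cap $\Si_t^*$ lies strictly inside $\Om$.

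Define the critical value $t^\ast:=\inf\{t<t_0 : \Si_s^*\subset\ol\Om \text{ for all } s\in(t,t_0)\}$ and analyze what prevents the reflection from continuing past $t^\ast$. Two geometric obstructions can occur: (a) $\Si_{t^\ast}^*$ is internally tangent to $\Si\cap\{x\cdot e<t^\ast\}$ at some point $p$ lying off $\pi_{t^\ast}$; or (b) the hyperplane $\pi_{t^\ast}$ becomes orthogonal to $\Si$ at some point $q\in\Si\cap\pi_{t^\ast}$. Near $p$ (resp.\ $q$), both $\Si_{t^\ast}^*$ and $\Si$ can be represented as graphs of $C^2$ functions $u$ and $v$ over a common tangent plane; since both surfaces have the same constant mean curvature $H$, their difference $w:=u-v\ge 0$ satisfies a linear, uniformly elliptic, second-order equation, obtained by linearizing the constant-mean-curvature operator along the segment joining the two graphs. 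In case (a), $w(p)=0$ is an interior minimum and the strong maximum principle yields $w\equiv 0$ locally; in case (b), Hopf's boundary-point lemma applied at $q$ forces the same conclusion. A standard open-closed connectedness argument along $\Si$ then propagates $w\equiv 0$ globally, so $\Si$ is symmetric with respect to $\pi_{t^\ast}$.

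Since the direction $e\in\SS^{N-1}$ was arbitrary, $\Si$ admits a hyperplane of symmetry perpendicular to every direction. Intersecting $N$ such hyperplanes for $N$ linearly independent choices of $e$ pins down a single common point; any compact hypersurface invariant under reflection in every hyperplane through a fixed point is invariant under the full orthogonal group fixing that point, hence must be a round sphere centered there.

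The hard part will be the rigorous justification of the linearization step: the constant-mean-curvature operator is quasilinear, so to apply the strong maximum principle and Hopf's lemma to $w$ one has to verify that interpolating between $u$ and $v$ produces an operator whose ellipticity constants and coefficient regularity do not degenerate on the comparison region. In the boundary case (b), extra care is required at $q$, where the two graphs meet tangentially on $\pi_{t^\ast}$ and one must exploit the interior sphere condition available on the reflected side in order to apply a version of Hopf's lemma suited to this tangential contact.
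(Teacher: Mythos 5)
The paper does not prove this statement: Theorem \ref{thm soap bubble} is imported as a classical result of Alexandrov \cite{Ale1958} and used as a black box in the proof of Theorem \ref{mainthm I}, so there is no internal argument to compare yours against. Your sketch is the standard moving-planes proof, which is indeed the argument Alexandrov devised for this purpose, and its architecture (critical plane, the two contact cases, linearization of the quasilinear mean-curvature operator, strong maximum principle plus Hopf lemma, open--closed propagation of the local coincidence) is sound. The step you flag as ``the hard part'' is in fact routine: writing the constant-mean-curvature equation as $F(D^2u,Du)=H$ and applying the mean value theorem along the segment $su+(1-s)v$ yields a linear operator with continuous coefficients and uniform ellipticity, because the gradients of $u$ and $v$ range over a compact set on the comparison region; and in case (b) the contact point $q$ lies in the relative interior of the flat face of the half-ball over which both graphs are defined, so the interior ball condition holds there and the ordinary Hopf boundary lemma suffices (no corner lemma of Serrin type is needed, unlike in the overdetermined PDE setting).

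The one genuine gap is in your final step: you have not shown that the symmetry hyperplanes associated with \emph{all} directions pass through a common point. Intersecting the hyperplanes for $N$ linearly independent directions does single out a point, but nothing in your argument forces the hyperplane for some further direction $e$ to contain that same point, and without this the passage to invariance under the full orthogonal group is unjustified. The standard repair is to observe that each reflection symmetry maps the enclosed body $\Om$ onto itself and therefore fixes its centroid, so every hyperplane of symmetry contains the centroid; from there your argument goes through, with connectedness of the hypersurface used at the very end to exclude a disjoint union of concentric spheres.
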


\begin{thm}[\cite{Sak bessatsu}]\label{thm bessatsu}
Let $m=2$. Let $\Om_2$ be an open ball and let $\Om_1\subset\subset\Om_1$ be a 
 bounded open set of class $C^2$ with finitely many connected components such that $D_1:=\Om_2\setminus\ol\Om_1$ is connected. Then, problem \eqref{bvp} admits a solution $u$ satisfying $\pa_n u\equiv c_1$ on $\pa\Om_2$ for some real constant $c_1$ if and only if $(\Om_1,\Om_2)$ are concentric balls.
\end{thm}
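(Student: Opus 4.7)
The plan is to exploit the radial symmetry of $\Om_2$ via a Cauchy uniqueness step on the outer region $D_1$, and then transfer the resulting rigidity to $\Om_1$ by means of a carefully chosen auxiliary function; no moving plane or reflection machinery turns out to be necessary.

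First, I would use that $\Om_2 = B_R(x_0)$ admits the explicit radial torsion profile
\[
v(x) := \frac{R^2-|x-x_0|^2}{2N\sg_2},
\]
which solves $-\sg_2\De v = 1$ in $\Om_2$ with $v|_{\pa\Om_2}=0$ and constant normal derivative on $\pa\Om_2$. Pairing this with the divergence theorem applied to the weak formulation \eqref{weak formulation} forces the overdetermined constant $c_1$ to coincide with $\pa_n v|_{\pa\Om_2}$, so that $u$ and $v$ share the same Cauchy data on $\pa\Om_2$. Since both solve the constant-coefficient equation $-\sg_2\De(\cdot)=1$ on the connected region $D_1=\Om_2\setminus\ol{\Om_1}$, their difference is harmonic with zero Cauchy data on the analytic hypersurface $\pa\Om_2$; Holmgren's theorem combined with the real-analyticity of harmonic functions then gives $u\equiv v$ on all of $D_1$.

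The second step is to study the difference $w:=u-v$, which by construction vanishes on $\pa\Om_1$ from the outer side, hence also from the inner side by the transmission condition $\jump{u}=0$. In $\Om_1$ a short computation yields that $-\De w$ equals the nonzero constant $(\sg_2-\sg_1)/(\sg_1\sg_2)=:Nk$ (nonzero by \eqref{sg different}), while the jump condition $\jump{\sg\pa_n u}=0$ combined with $u\equiv v$ on $D_1$ produces a Neumann datum for $w$ of the form $-k(x-x_0)\cdot n$ on $\pa\Om_1$. The crucial observation is that the quadratic correction $(k/2)|x-x_0|^2$ simultaneously cancels both the source term and this $(x-x_0)\cdot n$ Neumann term: the auxiliary function $W(x) := w(x) + (k/2)|x-x_0|^2$ is harmonic in $\Om_1$ with $\pa_n W \equiv 0$ on $\pa\Om_1$. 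A standard integration by parts forces $W$ to be constant on each connected component of $\Om_1$, whence $|x-x_0|^2$ is constant on each component of $\pa\Om_1$; so every such component is a full sphere centered at $x_0$. Since disjoint concentric balls must be nested (impossible for distinct components of $\Om_1$), and since nested components would disconnect $D_1$, the assumptions that $\Om_1$ has finitely many components and $D_1$ is connected force $\Om_1$ to be a single ball concentric with $\Om_2$.

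I expect the main subtlety to be the identification of the auxiliary function $W$ above; once it is in hand, the conclusion is an immediate Neumann-problem uniqueness. The hypothesis $\sg_1\ne\sg_2$ is essential, since it is exactly what produces the nonzero constant $k$ whose level set $\{|x-x_0|^2 = \text{const}\}$ reads off the radius of $\pa\Om_1$, and the $C^2$ regularity of the interfaces is needed to make both the transmission conditions and the integration by parts on $\Om_1$ classically meaningful.
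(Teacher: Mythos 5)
The paper does not actually prove Theorem \ref{thm bessatsu}: it is imported verbatim from \cite{Sak bessatsu} and used as a black box in the proof of Theorem \ref{mainthm I}, so there is no internal argument to compare yours against. That said, your proposal is a correct, self-contained proof of the nontrivial implication, and it follows the classical strategy for two-phase overdetermined problems with a spherical outer boundary: (i) integrating the equation over $\Om_2$ (using the flux continuity $\jump{\sg\pa_n u}=0$ across $\pa\Om_1$, since the constant test function is not in $H_0^1(\Om_2)$ one integrates the PDE directly) pins down $c_1=-R/(N\sg_2)=\pa_n v$ on $\pa\Om_2$; (ii) Holmgren plus analyticity of harmonic functions propagates $u\equiv v$ through the \emph{connected} shell $D_1$ --- this is exactly where the connectedness hypothesis enters, consistently with the counterexamples of Section \ref{s6}; (iii) your auxiliary function $W=w+\tfrac{k}{2}|x-x_0|^2$ with $k=(\sg_2-\sg_1)/(N\sg_1\sg_2)\ne 0$ by \eqref{sg different} is indeed harmonic in $\Om_1$ with vanishing Neumann data, and the energy identity forces each component of $\pa\Om_1$ onto a sphere centered at $x_0$; the clopen argument then upgrades ``contained in a sphere'' to ``equal to a sphere,'' and concentricity rules out more than one component. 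Two small points you should make explicit in a full write-up: the one-sided $C^{1,\al}$ regularity of $u$ up to $\pa\Om_1$ from both sides (available for $C^2$ interfaces, cf.\ \cite[Chapter 16]{LU}, \cite{athastra96}) is what legitimizes reading the transmission conditions pointwise and integrating by parts over $\Om_1$; and the trivial ``if'' direction of the equivalence should at least be mentioned. Interestingly, your argument never genuinely uses the finiteness of the number of components of $\Om_1$ beyond the regularity bookkeeping, since step (iii) already forces a single component.
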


We remark that both theorems above were originally stated in a more general setting (see also \cite{Ale1962}) but the formulations above are enough for our purposes.

Theorem \ref{mainthm I} now  follows by combining the two theorems above. 

\begin{proof}[Proof of Theorem \ref{mainthm I}]
Let $m=2$ and let the sets $\Om_1$ and $\Om_2$ satisfy the hypotheses of the theorem. In what follows, we will assume that problem \eqref{bvp} admits a solution $u$ of class $C^2$ in a neighborhood of $\pa\Om_2$ satisfying 
\begin{equation}\label{hypothesis}
    \pa_n u \equiv c_1 , \quad (\pa_n)^2u\equiv c_2 \quad \ton \pa\Om_2
\end{equation}
for some real constants $c_1,c_2$ and then show that $(\Om_1,\Om_2)$ must be concentric balls. The reverse implication is trivial and therefore omitted. 

Since the solution $u$ is of class $C^2$ in a neighborhood of $\pa\Om_2$, the decomposition formula for the Laplace operator (\cite[Proposition 5.4.12]{HP2018}) combined with \eqref{hypothesis} yields:
\begin{equation}\label{decompose the laplacian}
    \frac{-1}{\sg_2}= \De u = (\pa_n)^2 u + H \pa_n u + \De_\tau u = c_2 + Hc_1 \quad \ton \pa\Om_2,
\end{equation}
where $\De_\tau=\dv_\tau\circ \gr_\tau$ denotes the tangential Laplacian (that is, the tangential divergence of the tangential gradient,  also known as ``Laplace--Beltrami operator", see \cite[Definitions 5.4.5, 5.4.6 and 5.4.11]{HP2018}) on $\pa\Om_2$ and $H$ is the (additive) mean curvature given by the tangential divergence of the outward unit normal $n$ (notice that, under this definition, the mean curvature of a ball of radius $R$ is $\frac{N-1}{R}$).

The terms in \eqref{decompose the laplacian} can be rearranged to show that the mean curvature $H$ is constant on the entire $\pa\Om_2$. Thus, by applying Theorem \ref{thm soap bubble} to each connected component of $\pa\Om_2$ we obtain that $\pa\Om_2$ is the disjoint union of a finite number of spheres with the same radius and orientation. This leaves us with just one possibility, that is, $\pa\Om_2$ is a sphere and $\Om_2$ is a ball. The conclusion readily follows from Theorem \ref{thm bessatsu}.
\end{proof}

\begin{remark}
Notice that in the proof above we did not use the connectedness of $\Om_1$ (nor that of $\pa\Om_2$) but just that of $\Om_2$ and $D_1=\Om_2\setminus\ol\Om_1$. 
\end{remark}

In \cite{multiphase 1, multiphase 2}, the authors showed the radial symmetry of the solutions to a similar multi-phase overdetermined problem in $\rn$ (in the elliptic and parabolic settings respectively),  where the overdetermined condition considered requires the solution $u$ to be constant of each interface. 
In our setting, the following analogous result holds:

\begin{corollary}
Let $m\in\NN$ and let $\Om_k$, $k\in \{1,\dots,m\}$, be as in the introduction. If the solution $u$ to \eqref{transmission problem} satisfies 
\begin{equation}\label{odc}
\begin{aligned}
    u&\equiv \al_k \quad \ton \pa\Om_k \quad (k=1,\dots, m-1), \\ 
    \pa_n u &\equiv c \quad \ton \pa\Om_m,
\end{aligned}
\end{equation}
for some real constants $\al_1, \dots, \al_{m-1}$ and $c$, then the sets $\Om_k$ are concentric balls and the function $u$ is radial.
\end{corollary}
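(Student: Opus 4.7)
The natural strategy is induction on $m$, peeling off the outermost layer at each step. The base case $m=1$ reduces to Serrin's Theorem \ref{thm serrin}, since then there are no interface conditions imposed. For the inductive step with $m\geq 2$, assume the corollary holds for $m-1$ phases; the plan is to first show that $\Om_m$ and $\Om_{m-1}$ are concentric balls, and then shift $u$ by $\al_{m-1}$ in $\Om_{m-1}$ to obtain a reduced $(m-1)$-phase problem satisfying the same type of hypotheses, to which the inductive hypothesis applies.

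The core step, and the main obstacle, is the rigidity claim for the outermost layer: the restriction $u|_{D_m}$ solves the annular overdetermined problem
\[
-\sg_m \De u = 1 \tin D_m, \qquad u = 0,\ \pa_n u = c \ton \pa\Om_m, \qquad u = \al_{m-1} \ton \pa\Om_{m-1},
\]
with constant Dirichlet data on both boundary components and a Neumann overdetermination on the outer one. I would prove that $(\Om_{m-1},\Om_m)$ are concentric balls via a moving-plane argument adapted to the annular geometry: for any fixed direction $e$, slide a reflecting hyperplane inward from $+\infty$. The constancy of $u$ on each boundary component ensures that the difference $w:=u-u^\la$ between $u$ and its reflection $u^\la$ vanishes on the portions of the reflected boundary that come from $\pa D_m$, enabling a comparison via the maximum principle on the reflected cap. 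At the critical configuration where the reflection first becomes internally tangent, Hopf's boundary-point lemma, applied in conjunction with the constant Neumann datum $\pa_n u \equiv c$, forces $w\equiv 0$ and hence reflection symmetry in every direction; thus $u|_{D_m}$ is radial and $\Om_m, \Om_{m-1}$ are concentric balls. The delicate part of this argument is to handle the configurations in which first contact occurs on the reflection of the inner boundary $\pa\Om_{m-1}$, where the sign of $w$ on that piece of the reflected boundary is not a priori definite and a separate case analysis is required.

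Once the claim is established, $u|_{D_m}$ is the radial torsion profile on the spherical annulus $D_m$, so $\pa_n u$ is constant on $\pa\Om_{m-1}$ from the $D_m$-side; the transmission condition $\jump{\sg\, \pa_n u}=0$ extends this to the $D_{m-1}$-side. The shifted function $\tilde u := u - \al_{m-1}$ on $\Om_{m-1}$ then solves the $(m-1)$-phase torsion problem on $\Om_{m-1}$ with outer Dirichlet datum $\tilde u=0$ on $\pa\Om_{m-1}$, constant interface values $\tilde u \equiv \al_k - \al_{m-1}$ on $\pa\Om_k$ for $k=1,\dots,m-2$, and constant Neumann datum on the new outer boundary $\pa\Om_{m-1}$. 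The inductive hypothesis then yields that $\Om_1,\dots,\Om_{m-1}$ are concentric balls, sharing the same center as $\Om_m$ by the previous step, and the radiality of $u$ on each $D_k$ follows. This closes the induction.
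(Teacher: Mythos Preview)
Your induction is set up to peel off the \emph{outermost} layer, which forces you through a Serrin-type rigidity result for the one-phase annular problem
\[
-\sg_m\De u=1 \ \tin D_m,\qquad u=0,\ \pa_n u=c \ \ton \pa\Om_m,\qquad u=\al_{m-1}\ \ton \pa\Om_{m-1}.
\]
This is the real content of your argument, and the moving-plane sketch you give does not close. The obstruction you flag is genuine and, as stated, fatal: when the reflection of $\pa\Om_{m-1}$ first enters the comparison region, on that piece of the boundary you only know $u^\la=\al_{m-1}$, while $u$ at the reflected point is an interior value of the torsion-type solution in $D_m$ and has no reason to lie on either side of $\al_{m-1}$ (superharmonicity gives only a minimum principle, so $u$ may well exceed $\al_{m-1}$). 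Without a sign for $w=u-u^\la$ on that portion of $\pa\Si_\la'$, the maximum-principle comparison breaks down, and there is no Hopf step to rescue it because you have imposed no Neumann datum on $\pa\Om_{m-1}$. The annular symmetry results that do go through by moving planes (Reichel, Sirakov) require a constant Neumann condition on the \emph{inner} boundary precisely to handle this case; here that condition is not available a priori.

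The paper sidesteps this entirely by peeling off the \emph{innermost} layer. Because $u\equiv\al_1$ on $\pa\Om_1$, the affine rescaling
\[
\widetilde u:=\begin{cases}\dfrac{\sg_1}{\sg_2}(u-\al_1)+\al_1 & \tin \Om_1,\\[1ex] u & \tin \Om_m\setminus\Om_1,\end{cases}
\]
has matching value \emph{and} matching normal derivative across $\pa\Om_1$ (the latter from the transmission condition $\sg_1\,\pa_n u|_-=\sg_2\,\pa_n u|_+$), so $\widetilde u$ solves the $(m{-}1)$-layer transmission problem on $\Om_2\subset\subset\cdots\subset\subset\Om_m$ with the \emph{same} outer overdetermination $\pa_n\widetilde u\equiv c$ on $\pa\Om_m$ and the same level-set constraints on $\pa\Om_2,\dots,\pa\Om_{m-1}$. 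The inductive hypothesis then makes $\Om_2,\dots,\Om_m$ concentric balls and $\widetilde u$ radial; since $\pa\Om_1$ is a level set of the radial function $\widetilde u$, it is a concentric sphere as well. No auxiliary annular rigidity theorem is needed beyond Serrin's base case.
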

\begin{proof}
We will show the claim by induction on the number of layers $m$. The base case $m=1$ is exactly Serrin's result, Theorem \ref{thm serrin}. In what follows, let us assume that the claim holds when the number of layers is strictly less than $m$ and then show that the claim holds for $m$ as well. Let $u$ be the solution to \eqref{transmission problem} with $m$ layers $\Om_1\subset\subset \dots \subset\subset \Om_m$ and assume that $u$ satisfies the overdetermined conditions \eqref{odc}. Let
\begin{equation*}
    \widetilde u := \begin{cases}
        \frac{\sg_1}{\sg_2}(u-\al_1) +\al_1 \quad \tin \Om_1,\\
        u \quad \tin \Om_{m}\setminus \Om_1.
    \end{cases}
\end{equation*}
By construction, $\widetilde u$ solves the transmission problem \eqref{transmission problem} with $m-1$ layers $\Om_2\subset\subset \dots \subset\subset \Om_{m}$. Moreover, $\widetilde u$ also satisfies the overdetermined conditions \eqref{odc} (starting from the ``first" interface $\pa\Om_2$). Thus, by the inductive hypothesis, $\Om_2,\dots, \Om_m$ are concentric balls and $\widetilde u$ is radial. As a consequence, $u$ is also radial and, since $\pa\Om_1$ is a level set of $u$, the remaining set $\Om_1$ is also a ball concentric with $\Om_m$. This concludes the proof.
\end{proof}

\section{Preliminaries on shape derivatives}\label{s3}

In this section, we are going to introduce the main definitions and known results concerning shape calculus for two-phase problems that are going to be useful in the proof of Theorem \ref{mainthm II}. The experienced reader might therefore skip this section. 

In what follows, let $\Om_1$ and $\Om_2$ be concentric balls of radii $R\in (0,1)$ and $1$ respectively. Also, without loss of generality, suppose that $\sg_2:=1$.  
Let $\al\in(0,1)$. 
For sufficiently small $\eta\in C^{2,\al}(\pa\Om_1)$ and $\xi\in C^{2,\al}(\pa\Om_2)$, let $D_\eta$ and $\Om_\xi$ be the bounded domains whose boundaries are given by 
\begin{equation}\label{D_eta Om_xi}
    \pa D_\eta:=\setbld{x+\eta(x) n(x)}{x\in \pa\Om_1}, \quad
        \pa \Om_\xi:=\setbld{x+\xi(x) n(x)}{x\in \pa\Om_2}.
\end{equation}

Let $v_{\xi,\eta}$ be the solution to the following two-phase boundary value problem associated to the pair $(D_\eta,\Om_\xi)$:
\begin{equation}\label{v_eta xi}
    \begin{cases}
        -\dv(\sg_{\xi,\eta} \gr v_{\xi,\eta})=1, \quad \tin \Om_\xi,\\
        v_{\xi,\eta}=f \quad \ton \pa\Om_\xi,
    \end{cases}
\end{equation}
where $\sg_{\xi,\eta}:=\sg_1\cX_{D_\eta}+\sg_2 \cX_{\Om_\xi\setminus \ol D_\eta}$ and $f\in C^{2,\al}(\rn)$ is a given function. 

The machinery of shape derivatives is the right tool to give a quantitative description of how $v_{\xi,\eta}$ depends on the perturbations $(\xi,\eta)$.
The main technical difficulties lie in the following two points: firstly, the functions $v_{\xi,\eta}$ depend on two parameters, and secondly, each $v_{\xi,\eta}$ lies in a different function space depending on the pair $(\xi,\eta)$.
To overcome these difficulties, let $\Theta:=C^{2,\al}(\rn,\rn)$ and consider the following construction.
For small $\te\in \Theta$, set 
\begin{equation*}
    D_\te:=(\id+\te)(\Om_1), \quad 
    \Om_\te:=(\id+\te)(\Om_2), \quad \sg_\te:= \sg_1\cX_{D_\te}+\cX_{\Om_\te\setminus\ol D_\te}
\end{equation*}
and let $v_\te$ be the unique solution to \eqref{v_eta xi} with respect to the pair $(D_\te,\Om_\te)$. Moreover, set 
\begin{equation}\label{pulled-back function}
    V(\te):= v_\te\circ(\id+\te)\in H^1(\Om_2),\quad \text{for small $\te\in \Theta$}. 
\end{equation}
Then, the (first-order) \emph{shape derivative} of $v_\te$ at $\te=0$ is defined as 
\begin{equation}\label{shape derivative}
    v'[\te]:=V'(0)[\te]-\gr V(0)\cdot \te, \quad \tfor \te\in \Theta,
\end{equation}
where $V'(0)[\theta]$ denotes the Fr\'echet derivative of $V$ at $\te=0$ in the direction $\te\in \Theta$. Notice that the definition \eqref{shape derivative} is given in such a way as to be compatible with a formal application of partial differentiation with respect to $\te$ in \eqref{pulled-back function}.

\begin{lemma}\label{lemma shape derivative}
We have the following:
\begin{enumerate}[(i)]
    \item     The map $\te\mapsto V(\te)\in H^1(\Om_2)$ is Fr\'echet differentiable in a neighborhood of $0\in \Theta$. 
    \item Let $U$ be a neighborhood of $\pa\Om_2$ that does not intersect $\ol\Om_1$ and set $K:= \ol U \cap \ol \Om_2$. Then, $\te\mapsto \restr{V(\te)}{K}\in C^{2,\al}(K)$ is Fr\'echet differentiable in a neighborhood of $0\in \Theta$. 
    \item Let $E: C^{2,\al}(\pa\Om_2)\times C^{2,\al}(\pa\Om_1)\to \Theta$ be a bounded linear extension operator such that 
    \begin{equation*}
        \restr{E(\xi,\eta)}{\pa\Om_1}=\eta n, \quad \restr{E(\xi,\eta)}{\pa\Om_2}=\xi n \quad \tfor (\xi,\eta)\in C^{2,\al}(\pa\Om_2)\times C^{2,\al}(\pa\Om_1). 
    \end{equation*}
    Following \eqref{pulled-back function}, set $V(\xi, \eta):=V(E(\xi,\eta))$. Then, the mappings $(\xi,\eta)\mapsto V(\xi,\eta)\in H^1(\Om_2)$ and $(\xi,\eta)\mapsto \restr{V(\xi,\eta)}{K}\in C^{2,\al}(K)$ are Fr\'echet differentiable in a neighborhood of $(0,0)\in C^{2,\al}(\pa\Om_2)\times C^{2,\al}(\pa\Om_1)$. 
    \item Following \eqref{shape derivative}, let $v'[\xi]:=v'[E(\xi,0)]$ denote the shape derivative of $v_\te$ with respect to the outer perturbation $\xi$ only. Then, $v'[\xi]$ is independent of the extension operator $E$ and can be characterized as the unique solution to the following boundary value problem:
\begin{equation}\label{general shape derivative}
    \begin{cases}
        -\dv(\sg \gr v'[\xi])=0\quad\tin \Om_2,\\
        v'[\xi]=(\pa_n f - \pa_n V(0))\xi\quad \ton \pa\Om_2.
    \end{cases}
\end{equation}    
\end{enumerate}
\end{lemma}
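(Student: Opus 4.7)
The plan is to transport problem \eqref{v_eta xi} from the moving pair $(D_\theta, \Om_\theta)$ onto the fixed reference pair $(\Om_1, \Om_2)$ via the pull-back $\Phi_\theta := \id + \te$, then apply the implicit function theorem to deduce the Fréchet differentiability statements in (i)--(iii), and finally derive the characterization in (iv) by differentiating the pulled-back weak formulation along a perturbation of the form $\te = E(\xi, 0)$.

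For (i), a standard calculation rewrites the weak form of \eqref{v_eta xi} as the identity
\begin{equation*}
\int_{\Om_2} \sg\, A(\te) \gr V(\te) \cdot \gr \varphi \, dx = \int_{\Om_2} J(\te)\, \varphi \, dx \quad \text{for all } \varphi \in H_0^1(\Om_2),
\end{equation*}
with $V(\te) = f \circ \Phi_\te$ on $\pa \Om_2$, where $J(\te) := \det D \Phi_\te$ and $A(\te) := J(\te)(D\Phi_\te)^{-1}(D\Phi_\te)^{-T}$ depend smoothly on $\te \in \Theta$ with values in $L^\infty(\Om_2)$. A key simplification is that $\sg_\te \circ \Phi_\te = \sg$ identically on $\Om_2$, because $\Phi_\te$ maps $\Om_1$ bijectively onto $D_\te$ by construction; thus no moving coefficient appears. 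Setting $W(\te) := V(\te) - f \circ \Phi_\te \in H_0^1(\Om_2)$, I would cast the problem as $G(\te, W) = 0$ for a smooth map $G : \Theta \times H_0^1(\Om_2) \to H^{-1}(\Om_2)$; at $\te = 0$, the partial derivative $\pa_W G(0, W(0))$ is the isomorphism $W \mapsto -\dv(\sg \gr W)$ (by the Lax--Milgram theorem applied to the coercive bilinear form associated with $\sg$), and the implicit function theorem delivers the claimed $H^1$-differentiability.

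For (ii), if $\te$ is sufficiently small in $\Theta$, the set $\Phi_\te^{-1}(K)$ stays compactly contained in $\Om_2 \setminus \ol{\Om_1}$, where $\sg$ is constant. On this set the pulled-back equation has smooth coefficients and $C^{2,\al}$ Dirichlet data; Schauder-type boundary estimates, applied to difference quotients of $V(\te)$ in the parameter $\te$ and combined with the $H^1$-differentiability from (i), then upgrade the Fréchet differentiability to the $C^{2,\al}(K)$ topology. Part (iii) follows immediately from (i), (ii) and the chain rule, since $E$ is bounded and linear.

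For (iv), take $\te := E(\xi, 0)$, so that $\restr{\te}{\pa\Om_1} = 0$. Then $\Phi_\te$ fixes $\pa \Om_1$, hence $D_\te = \Om_1$ remains unperturbed and the transmission coefficients do not move under differentiation. Formally differentiating the pulled-back variational identity at $\te = 0$ and invoking the definition \eqref{shape derivative}, after a careful integration by parts that cancels the contributions of $A'(0)[\te]$ and $J'(0)[\te]$ against the convective term $\gr V(0) \cdot \te$, yields the homogeneous transmission equation $-\dv(\sg \gr v'[\xi]) = 0$ in $\Om_2$. The Dirichlet condition is then obtained by differentiating $V(\te) = f \circ \Phi_\te$ on $\pa \Om_2$: since $\te = \xi n$ there, one gets $V'(0)[\te] = \xi\, \pa_n f$, whence
\begin{equation*}
v'[\xi] = V'(0)[\te] - \pa_n V(0)\, \xi = (\pa_n f - \pa_n V(0))\, \xi \quad \ton \pa \Om_2,
\end{equation*}
which matches \eqref{general shape derivative}. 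Independence of $v'[\xi]$ from the extension $E$ is then immediate from the unique solvability of this problem, since the resulting boundary datum involves only $\xi$. The main technical obstacle is the interior integration-by-parts calculation mentioned above; although classical in shape calculus, it must be carried out with care so as to correctly handle the distributional contribution across the interface $\pa \Om_1$, which vanishes here thanks to $\restr{\te}{\pa\Om_1} = 0$.
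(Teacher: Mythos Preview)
Your proposal is correct and follows essentially the same route as the paper's (sketch of a) proof: both reduce to the homogeneous Dirichlet case by subtracting (a pull-back of) $f$, invoke the implicit function theorem together with Schauder estimates for (i)--(iii), and then read off the boundary condition in (iv) from the identity $V(\te)=f\circ\Phi_\te$ on $\pa\Om_2$ combined with the definition \eqref{shape derivative}. The paper's only additional wrinkle is that it subtracts $f$ \emph{before} pulling back (so $w_\te:=v_\te-f$ solves a homogeneous Dirichlet problem on $\Om_\te$ with modified right-hand side $F_\te=1+\dv(\sg_\te\gr f)$), but since $W(\te)=w_\te\circ\Phi_\te=V(\te)-f\circ\Phi_\te$ this is exactly your $W(\te)$, and the two presentations are equivalent.
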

\begin{proof}[Sketch of the proof]
In the case of the Laplace operator ($\sg\equiv 1$) with homogeneous Dirichlet boundary conditions ($f\equiv 0$), the claims $(i)-(iv)$ are well-known results that can be obtained by a standard procedure that combines the implicit function theorem (Theorem \ref{ift}, page \pageref{ift}) and the Schauder regularity theory \cite[Chapter 6]{GT}: see, for instance, \cite[Section 5.3 and the final remark therein]{HP2018}. Also, the case of two-phase boundary value problems with homogeneous Dirichlet boundary conditions has been dealt with in \cite[Appendix]{cava sap}, while the case of general boundary conditions for the Laplace operator has been briefly covered in \cite[Section 5.6]{HP2018} and the references therein. Finally, as far as the extension operator $E$ is concerned, we refer the interested reader to \cite[Section 6.9]{GT}.

Here, we will just limit to showing a simple trick that allows us to reduce to the cases mentioned above. 
For small $\te\in \Theta$, let $v_\te$ be the solution to \eqref{v_eta xi} with respect to the pair $(D_\te, \Om_\te)$ and let 
\begin{equation*}
    w_\te:=v_\te-f\in H_0^1(\Om_\te)\cap C^{2,\al}(K_\te),\quad W(\te):= w_\te\circ(\id+\te)\in H^1_0(\Om_2)\cap C^{2,\al}(K),
\end{equation*}
where $K_\te:=(\id+\te)(K)$. Notice that, by construction, $w_\te$ is a weak solution to
\begin{equation*}
  \begin{cases}
     -\dv(\sg_\te \gr w_\te) = F_\te \quad\tin \Om_\te,\\
     w_\te =0\quad \ton \pa\Om_\te,
  \end{cases}  
\end{equation*}
where the function $F_\te\in H^{-1}(\Om_\te)\cap C^{2,\al}(K_\te)$ is given by $F_\te:= 1+\dv(\sg_\te\gr f)$. Also, notice that, by construction, $F_\te$ admits an extension to $\rn$ that is independent of $\xi$. 
Therefore, all results $(i)-(iv)$ hold for $w_\te$. Since $v_\te=w_\te + f$ by definition, it is clear that $(i)-(iii)$ hold for $v_\te$ as well. To conclude, we just need to check that $v'[\xi]$ solves \eqref{general shape derivative}. To this end, notice that, by definition, we have $w_\te= v_\te-f$. As a result, $w'[\xi]=v'[\xi]$ and thus $v'[\xi]$ satisfies the equation in \eqref{general shape derivative}. 
On the other hand, notice that $W(0)=V(0)-f$ holds by construction, while, by $(iv)$, $w'[\xi]$ satisfies the boundary condition 
\begin{equation*}
w'[\xi]= -\pa_n W(0)\xi \quad \ton \pa\Om_2.    
\end{equation*}
The claim $(iv)$ for $v_\te$ follows by combining the identities above.
\end{proof}

\section{The two-phase Dirichlet-to-Neumann map}\label{s4}
As in the previous section, let $\Om_1$ and $\Om_2$ be concentric balls of radii $R\in (0,1)$ and $1$ respectively and let $\sg_2:=1$. 
Let us introduce the following two-phase Dirichlet-to-Neumann map $\cN:C^{2,\al}(\pa\Om_2)\to C^{1,\al}(\pa\Om_2)$ defined as $\xi\mapsto \pa_n w[\xi]$, where $w[\xi]$ is the unique solution to the following boundary value problem 
\begin{equation}\label{w xi}
    \begin{cases}
        -\dv(\sg \gr w)=0 \quad \tin \Om_2,\\
        w=\xi \quad \ton \pa\Om_2. 
    \end{cases}
\end{equation}
Let $\{Y_{k,i}\}_{k,i}$ ($k\in \{0,1,\dots\}$, $i\in\{1,2,\dots, d_k\}$) be a maximal family of linearly independent solutions to the eigenvalue problem
\begin{equation*}
-\De_{\tau} Y_{k,i}=\lambda_k Y_{k,i} \quad\textrm{ on } \pa\Om_2,
\end{equation*}
with $k$-th eigenvalue $\lambda_k=k(N+k-2)$ of multiplicity $d_k$ and normalized in such a way that $\norm{Y_{k,i}}_{L^2(\pa\Om_2)}=1$. The functions $\{Y_{k,i}\}$ are usually referred to as spherical harmonics. 
By the method of separation of variables, it can be shown that the spherical harmonics form an orthonormal basis of eigenfunctions of $\cN$ in $L^2(\pa\Om_2)$. The eigenvalues of $\cN$ have been computed in \cite{CY1}.
\begin{lemma}\label{lem eigenvalues}
For $k\in \NN\cup\{0\}$ and $i\in \{1,\dots, d_k\}$, we have
\begin{equation}\label{cN eigenvalues}
    \cN (Y_{k,i}) = k \frac{(2-N-k)(1-\sg_1)+(N-2+k+k\sg_1)R^{2-N-2k}}{F} \, Y_{k,i},
\end{equation}
where 
\begin{equation*}
    F:=k(1-\sg_1)+(N-2+k+k \sg_1)R^{2-N-2k}>0.
\end{equation*}
Moreover, $\cN(Y_{k,i})=0$ if and only if $k=0$. 
\end{lemma}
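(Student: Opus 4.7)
The plan is to use separation of variables in spherical coordinates, exploiting that $\{Y_{k,i}\}$ is a complete orthonormal basis of $L^2(\pa\Om_2)$. It suffices to fix $\xi=Y_{k,i}$ and show that the corresponding solution $w[\xi]$ of \eqref{w xi} has a normal derivative on $\pa\Om_2$ that is a scalar multiple of $Y_{k,i}$, then compute the scalar. First I would recast \eqref{w xi} as the transmission problem: $\Delta w=0$ in $\Om_1$, $\Delta w=0$ in $\Om_2\setminus\ol{\Om_1}$, $\jump{w}=0$ and $\jump{\sg\pa_n w}=0$ on $\pa\Om_1$ (so $\sg_1\pa_r w|_{r=R^-}=\pa_r w|_{r=R^+}$), and $w=Y_{k,i}$ on $\pa\Om_2$.

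Next, I would use the standard fact that a harmonic function of the form $R(r)Y_{k,i}(\om)$ has radial part spanned by $r^k$ and $r^{2-N-k}$. Regularity at the origin forces
\begin{equation*}
w(r,\om)=A\,r^k\,Y_{k,i}(\om)\quad\tin \Om_1,\qquad w(r,\om)=\bigl(B\,r^k+C\,r^{2-N-k}\bigr)Y_{k,i}(\om)\quad\tin \Om_2\setminus\ol{\Om_1},
\end{equation*}
with the case $k=0$ handled separately: constants are the only harmonic functions on $\Om_2$ with constant outer data (by uniqueness), so $\cN(Y_{0,1})=0$, proving the final statement on this side.

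For $k\ge 1$, the three conditions (outer Dirichlet, continuity across $\pa\Om_1$, and weighted flux continuity) yield the linear system
\begin{equation*}
B+C=1,\qquad AR^k=BR^k+CR^{2-N-k},\qquad \sg_1 AkR^{k-1}=BkR^{k-1}+C(2-N-k)R^{1-N-k}.
\end{equation*}
Eliminating $A$ and $B$, and setting $s:=R^{2-N-2k}$, one obtains
\begin{equation*}
C=\frac{k(1-\sg_1)}{k(1-\sg_1)+(N-2+k+k\sg_1)\,s}=\frac{k(1-\sg_1)}{F}.
\end{equation*}
The eigenvalue is then $\cN(Y_{k,i})=\pa_r w|_{r=1}\cdot Y_{k,i}^{-1}=Bk+C(2-N-k)=k-C(N+2k-2)$, and plugging in the value of $C$ and simplifying the numerator produces exactly the expression claimed in \eqref{cN eigenvalues}.

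It remains to verify $F>0$ and that the numerator in \eqref{cN eigenvalues} is nonzero for $k\ge 1$. Both rest on the same observation: since $R\in(0,1)$ and $2-N-2k<0$ for $k\ge 1$, we have $R^{2-N-2k}>1$. For $F$, if $\sg_1\le 1$ both terms are already nonnegative, while if $\sg_1>1$ one bounds $F\ge k(1-\sg_1)+(N-2+k+k\sg_1)=N-2+2k>0$. For nonvanishing, one checks the elementary algebraic inequality $(N+k-2)(1-\sg_1)<N-2+k+k\sg_1$ (equivalent to $\sg_1(N+2k-2)>0$), which together with $R^{2-N-2k}>1$ forbids cancellation in the numerator. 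I do not anticipate a conceptual obstacle: the argument is essentially a bookkeeping exercise, and the only thing to watch is that the sign conventions for $\sg_2=1$ and the outward normal are applied consistently in the flux condition at $\pa\Om_1$.
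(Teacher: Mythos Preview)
Your proof is correct and follows essentially the same approach as the paper. The paper cites \cite{CY1} for the explicit eigenvalue formula (which you derive via the standard separation-of-variables computation, presumably as in that reference) and then verifies nonvanishing of the numerator for $k\ge 1$ by observing it is a decreasing function of $R$ and evaluating at $R=1$ to obtain $\sg_1(N+2k-2)>0$; your algebraic-inequality argument amounts to the same calculation. The paper does not separately justify $F>0$, which you do supply.
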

\begin{proof}
Since the eigenvalues of \eqref{cN eigenvalues} have been computed in \cite{CY1}, in what follows we only need to check that the right-hand side in \eqref{cN eigenvalues} vanishes if and only if $k=0$. 
Furthermore, since $\cN(Y_{0,i})=0$ by construction, it will suffice to show that $\cN(Y_{k,i})\ne 0$ for $k\in \NN$.
To this end, let $\varphi$ denote the numerator in the right-hand side of \eqref{cN eigenvalues}, that is 
\begin{equation*}
    \varphi(R):= (2-N-k)-\sg_1(2-N-k)+(N-2+k+k\sg_1)R^{2-N-2k} . 
\end{equation*}
We will show that $\varphi(R)>0$ for all $R\in (0,1]$ and $k\in \NN$, proving the claim. First, notice that $\varphi$ is a decreasing function of $R$. Thus, for all $R\in (0,1]$,
\begin{equation*}
    \varphi(R)\ge \varphi(1)= (2-N-k)-\sg_1(2-N-k)+(N-2+k+k\sg_1)= \sg_1 (2k-2+N)>0,
\end{equation*}
which is what we wanted to show.
\end{proof}

Let  $C_*^{i,\al}(\pa\Om_2)$ denote the set of all functions in $C^{i,\al}(\pa\Om_2)$ with zero average over $\pa\Om_2$ ($i\in\{1,2\}$).
Notice that, by Lemma \ref{lem eigenvalues}, $\cN$ fixes the eigenspaces of the Laplace-Beltrami operator, whence $\cN$ is a well-defined operator from $C_*^{2,\al}(\pa\Om_2)$ into $C_*^{1,\al}(\pa\Om_2)$. 
Also by Lemma \ref{lem eigenvalues}, $\cN$ is injective. 
Actually, it can be shown that $\cN:C_*^{2,\al}(\pa\Om_2)\to C_*^{1,\al}(\pa\Om_2)$ is a bijection. In order to show this, we first need the following Lemma. 
\begin{lemma}\label{id+N is a bijection}
    The map $\id+\cN : C^{2,\al}(\pa\Om_2)\to C^{1,\al}(\pa\Om_2)$ is a bijection.
\end{lemma}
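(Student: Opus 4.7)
The plan is to reformulate the equation $(\id+\cN)\xi = g$ as a two-phase Robin-type boundary value problem and then to invoke standard existence, uniqueness, and regularity theory. Injectivity will be handled separately using the explicit spectral information provided by Lemma \ref{lem eigenvalues}.

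For injectivity, suppose $\xi\in C^{2,\al}(\pa\Om_2)$ satisfies $\xi+\cN(\xi)=0$. Decomposing $\xi=\sum_{k,i}a_{k,i}Y_{k,i}$ in $L^2(\pa\Om_2)$ and applying Lemma \ref{lem eigenvalues}, one obtains $(1+\mu_k)\,a_{k,i}=0$ for every $k,i$, where $\mu_k$ denotes the eigenvalue of $\cN$ corresponding to $Y_{k,i}$. The proof of Lemma \ref{lem eigenvalues} shows $\mu_k\ge 0$ (indeed $\mu_k = k\,\varphi(R)/F$ with $\varphi, F > 0$), so $1+\mu_k\ge 1$ is never zero; hence $\xi=0$.

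For surjectivity, given $g\in C^{1,\al}(\pa\Om_2)$, I would look for a function $w$ solving the two-phase Robin problem
\begin{equation*}
    \begin{cases}
        -\dv(\sg\gr w)=0 \quad \tin \Om_2,\\
        w+\pa_n w = g \quad \ton \pa\Om_2.
    \end{cases}
\end{equation*}
The associated bilinear form
\begin{equation*}
    a(w,\varphi) := \int_{\Om_2} \sg\, \gr w\cdot\gr\varphi + \int_{\pa\Om_2} w\varphi
\end{equation*}
is continuous and coercive on $H^1(\Om_2)$, the coercivity coming from the Poincar\'e-type trace inequality $\|w\|_{L^2(\Om_2)}^2\le C(\|\gr w\|_{L^2(\Om_2)}^2+\|w\|_{L^2(\pa\Om_2)}^2)$; Lax--Milgram therefore yields a unique weak solution $w\in H^1(\Om_2)$. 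Setting $\xi:=w|_{\pa\Om_2}$, uniqueness for the Dirichlet problem \eqref{w xi} gives $w=w[\xi]$, whence $\cN(\xi)=\pa_n w|_{\pa\Om_2}$ and $(\id+\cN)\xi = g$.

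The main technical step, which I expect to be the principal obstacle, is to verify that this $\xi$ actually belongs to $C^{2,\al}(\pa\Om_2)$ rather than merely to $H^{1/2}(\pa\Om_2)$. However, since $\sg\equiv\sg_2$ in a collar neighborhood of $\pa\Om_2$ that does not meet $\ol\Om_1$ and $\pa\Om_2$ is smooth, $w$ satisfies the single-phase Robin problem $-\De w=0$, $w+\pa_n w=g$ near $\pa\Om_2$. Classical Schauder theory for oblique-derivative elliptic boundary value problems (see, e.g., \cite[Chapter 6]{GT}) then upgrades $w$ to $C^{2,\al}$ up to $\pa\Om_2$, giving $\xi\in C^{2,\al}(\pa\Om_2)$ and completing the proof.
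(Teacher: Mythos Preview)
Your proof is correct and follows essentially the same route as the paper: both set up the two-phase Robin problem, solve it via Lax--Milgram with the bilinear form $a(w,\varphi)=\int_{\Om_2}\sg\gr w\cdot\gr\varphi+\int_{\pa\Om_2}w\varphi$, and then bootstrap to $C^{2,\al}$ regularity near $\pa\Om_2$ via Schauder theory. The only minor difference is that you prove injectivity by the spectral computation of Lemma~\ref{lem eigenvalues} (showing $1+\mu_k\ge1$), whereas the paper obtains it directly from the coercivity of the bilinear form; both arguments are equally valid here.
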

\begin{proof}
We will show that, for all $\eta\in C^{1,\al}(\pa\Om_2)$ there exists a unique $\xi\in C^{2,\al}(\pa\Om_2)$ that satisfies
\begin{equation*}
\cN\xi+\xi=\eta.
\end{equation*}
First of all, let us consider the Sobolev space $H^1(\Om_2)$ endowed with the (equivalent) norm $\norm{\psi}_{H^1(\Om_2)}:=\norm{\gr\psi}_{L^2(\Om_2)}+\norm{\restr{\psi}{\pa\Om_2}}_{L^2(\pa\Om_2)}$ and consider the bilinear form $\cB: H^1(\Om_2)\times H^1(\Om_2)\to\RR$ given by
\begin{equation*}
\cB(\psi,\phi):= \int_{\Om_2} \sg \gr \psi\cdot \gr \phi 
+ \int_{\pa\Om_2} \psi\phi.
\end{equation*}
Notice that, by construction, $\cB$ is bilinear, continuous, and coercive.
Fix now an element $\eta\in C^{1,\al}(\pa\Om_2)\subset L^2(\pa\Om_2)$. By the Lax-Milgram theorem, there exists a unique function $w\in H^1(\Om_2)$ such that
\begin{equation}\label{B=<eta phi>}
\cB(w,\phi)= \left\langle \eta,\restr{\phi}{\pa\Om_2}\right\rangle_{L^2(\pa\Om_2)}\quad \text{for all }\phi\in H^1(\Om_2).
\end{equation}
Now, if we restrict the identity above to $\phi$ in $H^1_0(\Om_2)$, then we realize that $w$ must satisfy
\begin{equation}\label{w weak form}
\int_{\Om_2} \sg \gr w\cdot \gr \phi 
=0 \quad \text{for all }\phi\in H_0^1(\Om_2).
\end{equation}
In other words, $w$ satisfies the equation
\begin{equation}\label{w eq}
-\Delta w=0\quad \tin \Om_1 \cup (\Om_2\setminus \ol \Om_1)
\end{equation}
and the transmission conditions:
\begin{equation}\label{w tc}
\jump{w}=0, \quad \jump{\sg\pa_n w}=0 \quad \ton \pa \Om_1. 
\end{equation}
Moreover, integration by parts in \eqref{B=<eta phi>} and the arbitrariness of the trace of $\phi\in H^1(\Om_2)$ on $\pa\Om_2$ yield
\begin{equation}\label{w bc}
\pa_n w + w = \eta \quad \ton \pa\Om_2.
\end{equation}
Now, since $w$ is the solution to the transmission problem \eqref{w eq}, \eqref{w tc} and \eqref{w bc}, we can inductively bootstrap its regularity in a classical way by means of the standard elliptic regularity estimates \cite[Chapter 8]{GT} and the Schauder boundary estimates \cite[Chapter 6]{GT} (see for example the argument in the proof of \cite[Proposition 5.2]{kamburov sciaraffia} after $(5.7)$). We obtain that $w$ is of class $C^{2,\al}$ in an open neighborhood of $\pa\Om_2$ (whose closure does not intersect $\ol \Om_1$). In particular, the function \begin{equation}\label{xi=psi/c}
\xi:= \restr{w}{\pa\Om_2}
\end{equation}
is a well defined element of $C^{2,\al}(\pa\Om_2)$. This, together with \eqref{w eq}, \eqref{w tc} and \eqref{w bc}, implies that $w$ is the solution to \eqref{w xi}.
In particular, by \eqref{xi=psi/c} and \eqref{w bc},
\begin{equation*}
\cN\xi+\xi = \pa_n w + w =\eta \quad \ton \pa\Om_2.
\end{equation*}
By the arbitrariness of $\eta\in C^{1,\al}(\pa\Om_2)$, the above shows that $\id+\cN: C^{2,\al}(\pa\Om_2)\to C^{1,\al}(\pa\Om_2)$ is surjective. Injectivity follows from the coercivity of $\cB$. As a result, $\id+\cN: C^{2,\al}(\pa\Om_2)\to C^{1,\al}(\pa\Om_2)$ is a bijection (whose inverse is continuous by the bounded inverse theorem).    
\end{proof}

\begin{lemma}\label{N is a bijection}
The operator $\cN$ is a bijection from $C_*^{2,\al}(\pa\Om_2)$ into $C_*^{1,\al}(\pa\Om_2)$.
\end{lemma}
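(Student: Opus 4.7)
The plan is to leverage the preceding Lemma \ref{id+N is a bijection} together with Fredholm perturbation theory, having first checked that the range of $\cN$ is contained in $C_*^{1,\al}(\pa\Om_2)$. The range containment is a direct consequence of the weak formulation of \eqref{w xi}: for any $\xi\in C^{2,\al}(\pa\Om_2)$, integration by parts on each phase combined with the transmission condition $\jump{\sg\pa_n w}=0$ on $\pa\Om_1$ yields
\[
\int_{\pa\Om_2}\cN\xi \,dS=\int_{\Om_2}\sg\,\gr w[\xi]\cdot \gr 1\,dx=0.
\]
Injectivity of $\cN$ on $C_*^{2,\al}(\pa\Om_2)$ then follows immediately from Lemma \ref{lem eigenvalues}: the spherical harmonics $\{Y_{k,i}\}$ form an orthonormal basis of $L^2(\pa\Om_2)$ consisting of eigenfunctions of $\cN$ whose eigenvalues are nonzero for all $k\geq 1$, while any zero-average function has no $k=0$ component in its $L^2$-expansion.

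For surjectivity, the crucial observation is that the inclusion $C^{2,\al}(\pa\Om_2)\hookrightarrow C^{1,\al}(\pa\Om_2)$ is compact (Arzel\`a--Ascoli on the compact manifold $\pa\Om_2$, using $\al<1$). By Lemma \ref{id+N is a bijection}, the operator $\id+\cN: C^{2,\al}(\pa\Om_2)\to C^{1,\al}(\pa\Om_2)$ is a bijection, hence Fredholm of index zero; writing $\cN=(\id+\cN)-\id$ displays $\cN$ as the sum of a Fredholm operator of index zero and a compact operator, so $\cN:C^{2,\al}(\pa\Om_2)\to C^{1,\al}(\pa\Om_2)$ is itself Fredholm of index zero. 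Its kernel on the full space is the one-dimensional subspace of constants (by the energy identity $\int_{\Om_2}\sg|\gr w[\xi]|^2\,dx=\int_{\pa\Om_2}\xi\,\cN\xi\,dS$, which, if $\cN\xi=0$, forces $w[\xi]$ to be locally constant and then constant via $\jump{w}=0$ on $\pa\Om_1$), so the cokernel also has dimension one. Since the range is contained in the codimension-one subspace $C_*^{1,\al}(\pa\Om_2)$, the range must in fact equal $C_*^{1,\al}(\pa\Om_2)$. Restricting $\cN$ to the complementary subspace $C_*^{2,\al}(\pa\Om_2)$ of the kernel then produces the desired bijection, whose inverse is automatically continuous by the bounded inverse theorem.

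The main technical point I anticipate is largely bookkeeping: carefully performing the integration by parts across the discontinuity of $\sg$ at the interface $\pa\Om_1$, both to verify the range containment and to derive the energy identity. Once these identities are in hand, the Fredholm index-zero argument combined with the codimension-one count of $C_*^{1,\al}(\pa\Om_2)$ inside $C^{1,\al}(\pa\Om_2)$ makes the conclusion essentially automatic.
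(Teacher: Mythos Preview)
Your proof is correct and rests on the same ingredients as the paper's: Lemma \ref{id+N is a bijection}, compactness of the embedding $C^{2,\al}\hookrightarrow C^{1,\al}$, and a Fredholm argument. The execution differs slightly. The paper inverts $\id+\cN$ to obtain a compact operator $K:=(\id+\cN)^{-1}$ acting on the single space $C^{1,\al}(\pa\Om_2)$, applies the Riesz--Schauder theorem to $\id-K$ on the zero-average subspace (its injectivity there coming from Lemma \ref{lem eigenvalues}), and recovers the inverse via the algebraic identity $\cN\xi=\eta\iff(\id-K)\xi=K\eta$, giving the explicit formula $\cN^{-1}=TK$. You instead stay between the two H\"older spaces, write $\cN=(\id+\cN)-\iota$ as a compact perturbation of an isomorphism and hence Fredholm of index zero, and finish by matching the one-dimensional kernel against the codimension-one subspace $C_*^{1,\al}$. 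Your route has the virtue of verifying the range inclusion and the kernel identification directly from the energy identity rather than via the spherical-harmonic eigenvalues; the paper's route yields an explicit expression for $\cN^{-1}$.
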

\begin{proof}
Let $K:C^{1,\al}(\pa\Om_2)\to C^{2,\al}(\pa\Om_2)\hookrightarrow
C^{1,\al}(\pa\Om_2)$ denote the inverse of $\id+\cN$ (that exists by Lemma \ref{id+N is a bijection}). Notice that, by the compactness of the embedding $C^{2,\al}(\pa\Om_2)\hookrightarrow
C^{1,\al}(\pa\Om_2)$, $K$ is a compact operator from $C^{1,\al}(\pa\Om_2)$ into itself. 
By Lemma \ref{lem eigenvalues} and the Fredholm alternative (Riesz--Schauder theory) \cite[Theorem 6.6 (c)]{Brezis}, $\id-K$ admits a continuous inverse $T:C_*^{1,\al}(\pa\Om_2)\to C_*^{1,\al}(\pa\Om_2)$. Thus, for $(\xi,\eta)\in C_*^{2,\al}(\pa\Om_2)\times C_*^{1,\al}(\pa\Om_2)$ we have
\begin{equation*}
\cN\xi=\eta \iff \xi +\cN\xi = \xi+\eta \iff \xi= K(\xi+\eta) \iff (\id -K)\xi = K \eta \iff \xi = TK\eta. 
\end{equation*}
In other words, the operator $\cN: C_*^{2,\al}(\pa\Om_2)\to C_*^{1,\al}(\pa\Om_2)$ admits a continuous inverse, given by ${\cN}^{-1}=T\circ K$. 
\end{proof}

\section{Proof of Theorem \ref{mainthm II}}\label{s5}

Let $m\ge 3$. For $k=1,\dots, m$, let $\Om_k$ be the open ball of radius $R_k>0$ centered at the origin. Also assume that $0<R_{k}<R_{k+1}$ for $k=1,\dots, m-1$. Moreover, unless otherwise specified, we will always assume $R_2:=1$, $\sg_2:=1$ (notice that this does not result in a loss of generality).

In what follows, we will show Theorem \ref{mainthm II}. In particular, we will find a nontrivial collection of domains 
\begin{equation}\label{nontrivial domains}
    D_\eta\subset\subset \Om_\xi\subset\subset \Om_3 \subset\subset \dots \subset\subset \Om_m
\end{equation}
such that \eqref{infinite overdetermination} is satisfied.

To this end, we will employ the following version of the implicit function theorem for Banach spaces. 
(see \cite[Theorem 2.3, page 38]{AP1983} for a proof). 
\begin{thm}[Implicit function theorem]\label{ift}
Let $\Psi\in\cC^k(X\times \La,Z)$, $k\in\NN$, where $Z$ is a Banach space and $X$ (resp. $\La$) is an open set of a Banach space $\widetilde X$ (resp. $\widetilde \La$). Suppose that  $\Psi(x^*,\la^*)=0$ and that the partial derivative $\pa_x\Psi(x^*,\la^*)$ is a bounded invertible linear transformation from $X$ to $Z$. 

Then, there exist neighborhoods $\La'$ of $\la^*$ in $\widetilde \La$ and $X'$ of $x^*$ in $\widetilde X$, and a map $\xi\in\cC^k(\La',X')$ such that the following hold:
\begin{enumerate}[(i)]
\item $\Psi(\xi(\la),\la)=0$ for all $\la\in\La$,
\item If $\Psi(x,\la)=0$ for some $(x,\la)\in X'\times \La'$, then $x=\xi(\la)$,
\item $\xi'(\la)=-\left(\pa_x \Psi(p) \right)^{-1}\circ \pa_\la \Psi(p)$, where $p=(\xi(\la),\la)$ and $\la\in\La'$.
\end{enumerate}
\end{thm}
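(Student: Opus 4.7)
\textbf{Proof proposal for Theorem \ref{ift}.} The plan is to deduce the implicit function theorem from the Banach fixed point theorem, following the classical route. After translating coordinates, I may assume $(x^*,\la^*)=(0,0)$ and $\Psi(0,0)=0$; set $\cL:=\pa_x \Psi(0,0)$, which is a bounded linear isomorphism $\widetilde X\to Z$ by hypothesis. Introduce the auxiliary map
\begin{equation*}
T(x,\la):= x-\cL\inv \Psi(x,\la),
\end{equation*}
which has the defining property that $\Psi(x,\la)=0$ if and only if $x$ is a fixed point of $T(\cdot,\la)$. The whole theorem then reduces to a careful study of these fixed points.

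First I would show that $T(\cdot,\la)$ is a contraction mapping a small closed ball into itself, for $\la$ in a small neighborhood of $0$. Differentiating yields $\pa_x T(x,\la)=\id-\cL\inv\pa_x\Psi(x,\la)$, which vanishes at $(0,0)$. By continuity of $\pa_x\Psi$ and boundedness of $\cL\inv$, there exists $r>0$ such that $\|\pa_x T(x,\la)\|\le 1/2$ whenever $\|x\|\le r$ and $\|\la\|\le r$; shrinking to some $r'\in(0,r]$, continuity of $\Psi$ in $\la$ also ensures $\|T(0,\la)\|\le r/2$ for $\|\la\|\le r'$. Combined with the mean value inequality, $T(\cdot,\la)$ maps $\overline{B_r(0)}$ into itself and is a $1/2$-contraction. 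The Banach fixed point theorem then yields a unique $\xi(\la)\in \overline{B_r(0)}$ with $\Psi(\xi(\la),\la)=0$, establishing $(i)$ and $(ii)$ with $X':=B_r(0)\cap \widetilde X$ and $\La':=B_{r'}(0)\cap \widetilde \La$.

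Next I would address the regularity of $\xi$. Continuity follows from the fixed-point estimate
\begin{equation*}
\|\xi(\la_1)-\xi(\la_2)\|\le 2\|\cL\inv\|\,\|\Psi(\xi(\la_1),\la_1)-\Psi(\xi(\la_1),\la_2)\|.
\end{equation*}
For differentiability, note that on a (possibly smaller) neighborhood the derivative $\pa_x\Psi(\xi(\la),\la)$ remains invertible, since the set of invertible bounded linear operators is open in the operator norm topology and inversion is real-analytic there. Writing the first-order expansion of $\Psi$ at $p=(\xi(\la),\la)$ and using $\Psi(\xi(\la+h),\la+h)=\Psi(\xi(\la),\la)=0$ gives
\begin{equation*}
\xi(\la+h)-\xi(\la)= -\bigl(\pa_x\Psi(p)\bigr)\inv \pa_\la\Psi(p)[h]+o(\|h\|),
\end{equation*}
which identifies $\xi'(\la)$ and yields formula $(iii)$. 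To obtain $\xi\in\cC^k$, I would induct on $k$: the formula above expresses $\xi'$ as a composition $\la\mapsto p\mapsto (\pa_x\Psi(p),\pa_\la\Psi(p))\mapsto -\bigl(\pa_x\Psi(p)\bigr)\inv\circ\pa_\la\Psi(p)$, in which each intermediate map is of class at least $\cC^{k-1}$, so the chain rule for Fr\'echet derivatives propagates the regularity.

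The main obstacle is the $\cC^k$ bootstrap. One must be scrupulous about the various Banach spaces in play (the spaces of bounded multilinear operators, their compositions, and the open subset of invertible operators) and about the inversion map $A\mapsto A\inv$, whose smoothness is itself a nontrivial functional-analytic fact: the Neumann series expansion shows it is real-analytic on the open set of invertibles, but writing out the higher derivatives cleanly through the induction requires care. Everything else, once the fixed-point argument is in place, is routine but bookkeeping-heavy.
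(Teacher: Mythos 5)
The paper does not prove this statement at all: Theorem \ref{ift} is quoted as a known result with a pointer to \cite[Theorem 2.3, page 38]{AP1983}, so there is no in-paper argument to compare against. Your contraction-mapping proof is the standard one (and essentially the one in the cited reference) and is correct as outlined; the only points left implicit are that $r$ must also be small enough that $\overline{B_r(0)}\subset X$, and that the $o(\|h\|)$ step in the differentiability argument uses the local Lipschitz bound $\|\xi(\la+h)-\xi(\la)\|=O(\|h\|)$ coming from your fixed-point estimate together with $\Psi\in\cC^1$.
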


Before giving the proof of Theorem \ref{mainthm II}, let us first define some auxiliary functions.
Set $R_0:=0$ and let $u_0$ denote the following radial function:
\begin{equation}\label{u_0}
  u_0(x):= \begin{cases}
     \frac{1}{2N} \left( \sum_{j=k+1}^{m-1} \frac{1}{\sg_{j+1}} (R_{j+1}^2-R_j^2) +\frac{1}{\sg_{k+1}}(R_{k+1}^2-|x|^2)  \right), \\  \text{if }|x|\in [R_k,R_{k+1}], \quad (k=0,\dots, m-1),
  \end{cases}  
\end{equation}
where the value of the sum $\sum_{j=k+1}^{m-1}$ is set to be zero if $m-1<k+1$ (empty sum). 
It is easy to check that $u_0$ solves the transmission problem \eqref{transmission problem}. The following auxiliary function will also play a crucial role in our construction:
\begin{equation}\label{v_0}
  v_0(x):= \begin{cases}
     \frac{1}{2N} \left( \sum_{j=3}^{m-1} \frac{1}{\sg_{j+1}} (R_{j+1}^2-R_j^2) +\frac{1}{\sg_{3}}(R_{3}^2-|x|^2)  \right), \quad \text{if }|x|\in [0,R_{3}],\\
     u_0(x), \qquad \text{if }|x|\in (R_3, R_m]. 
  \end{cases}  
\end{equation}
This is nothing but the solution to problem \eqref{transmission problem} in the case $\sg_1=\sg_2=\sg_3$.

Let $\al\in(0,1)$. 
For sufficiently small $\eta\in C^{2,\al}(\pa\Om_1)$ and $\xi\in C^{2,\al}(\pa\Om_2)$, let $D_\eta$ and $\Om_\xi$ be the bounded domains whose boundaries are given by \eqref{D_eta Om_xi}. Moreover, let $v_{\xi,\eta}$ be the solution to the following boundary value problem associated to the pair $(D_\eta,\Om_\xi)$:
\begin{equation}\label{v_eta xi 2}
    \begin{cases}
        -\dv(\sg_{\xi,\eta} \gr v_{\xi,\eta})=1, \quad \tin \Om_\xi,\\
        v_{\xi,\eta}=v_0 \quad \ton \pa\Om_\xi,
    \end{cases}
\end{equation}
where $\sg_{\xi,\eta}:=\sg_1\cX_{D_\eta}+\sg_2 \cX_{\Om_\xi\setminus \ol D_\eta}$. Notice that $\restr{v_{\xi,\eta}}{(\xi,\eta)=(0,0)}=\restr{u_0}{\Om_2}$.

Lemma \ref{lemma shape derivative} yields 
\begin{equation}\label{v' eq}
    \begin{cases}
        -\dv(\sg\gr v'[\xi])=0, \quad \tin \Om_2,\\
        v'[\xi]= (\pa_n v_0-\pa_n u_0)\xi = \frac{1}{N}(1-\frac{1}{\sg_3})\xi \quad \ton \pa\Om_2.
    \end{cases}
\end{equation}
 
\begin{proof}[Proof of Theorem \ref{mainthm II}]
We will construct nontrivial domains $D_\eta$ and $\Om_\xi$ with $\eta\in C^{2,\al}(\pa\Om_1)$ and $\xi\in C^{2,\al}(\pa\Om_2)$ such that the solution $v_{\xi,\eta}$ to \eqref{v_eta xi 2} satisfies
\begin{equation}\label{what we want}
   \pa_{n_\xi} v_{\xi,\eta} = \sg_3 \pa_{n_\xi} v_0 \quad \ton \pa\Om_\xi,  
\end{equation}
where $\pa_{n_\xi}$ is the normal derivative in the outward direction on $\pa\Om_\xi$.
In other words, if \eqref{what we want} holds, then the function 
\begin{equation}
   v:=\begin{cases}
       v_{\xi,\eta} \quad \tin \Om_\xi,\\
       v_0 \quad \tin \Om_m\setminus \Om_\xi
   \end{cases}   
\end{equation}
solves the transmission problem \eqref{transmission problem} with respect to \eqref{nontrivial domains}. Moreover, since the function $v$ defined above is radial in $\Om_m\setminus \Om_\xi$, in particular, it satisfies \eqref{infinite overdetermination}.
We are therefore left with the problem of finding a nontrivial pair of functions $(\xi,\eta)$ such that \eqref{what we want} holds. 
To this end, consider the following mapping:
\begin{equation}\label{Psi}
\begin{aligned}
 \Psi:\; & C_*^{2,\al}(\pa\Om_2)\times C^{2,\al}(\pa\Om_1) \longrightarrow C_*^{1,\al}(\pa\Om_2),\\
&(\xi,\eta) \mapsto \Big( \big( \pa_{n_\xi}v_{\xi,\eta}-\sg_3\pa_{n_\xi} v_0 \big )\circ (\id+\xi n) \Big) J_\tau(\xi).
\end{aligned}
\end{equation}
Let us first clarify the notation employed in the definition of $\Psi$. Here, $n$ stands for the outward unit normal vector to the unperturbed boundary $\pa\Om_2$ (we recall that the outer normal to $\pa\Om_\xi$ is denoted by $n_\xi$) and $\id+\xi n$ is the natural pullback mapping from $\pa\Om_\xi$ to $\pa\Om_2$. Moreover, $J_\tau(\xi)$ is the tangential Jacobian  associated with the mapping $\id+\xi n$ (that is, it is the multiplicative term that appears in the integrand of a surface integral after the corresponding change of variables, see \cite[(5.67)--(5.68)]{HP2018}). It is known that both $n_\xi$ and $J_\tau(\xi)$ are differentiable with respect to $\xi$ at $\xi=0$ (see \cite[Proposition 5.4.14 and Lemma 5.4.15]{HP2018}. We remark that in \cite[Proposition 5.4.14]{HP2018} only G\^ateaux differentiability is shown. This notwithstanding, the Fr\'echet differentiability of the normal can be shown analogously or by noticing that a smooth extension of $n$ can be written as the normalized gradient of some subharmonic function vanishing on the boundary). These facts together with Lemma \ref{lemma shape derivative} imply that $\Psi$ is a well-defined and Fr\'echet differentiable mapping from a neighborhood of $(0,0)\in C_*^{2,\al}(\pa\Om_2)\times C^{2,\al}(\pa\Om_1)$ into $C^{1,\al}(\pa\Om_2)$. We just need to check that, for small $(\xi,\eta)\in C_*^{2,\al}(\pa\Om_2)\times C^{2,\al}(\pa\Om_1)$, the image $\Psi(\xi,\eta)$ is indeed a function of zero average over $\pa\Om_2$. 
To this end, notice that 
\begin{equation*}
    \dv(\sg_{\xi,\eta} \gr v_{\xi,\eta})=-1=\sg_3 \De v_0 \quad \tin \Om_\xi
\end{equation*}
and thus the divergence theorem and a change of variables yield the desired identity:
\begin{equation*}
    \begin{aligned}
\int_{\pa\Om_2} \pa_{n_\xi}v_{\xi,\eta}\circ(\id+\xi n) J_\tau (\xi) = \int_{\pa\Om_\xi} \pa_{n_\xi} v_{\xi,\eta} = \int_{\Om_\xi} \dv(\sg_{\xi,\eta} \gr v_{\xi,\eta})\\
=-|\Om_\xi| = \int_{\Om_\xi} \sg_3 \De v_0 = \int_{\pa\Om_\xi} \sg_3 \pa_{n_\xi} v_0 = \int_{\pa\Om_2} \sg_3 \pa_{n_\xi} v_0\circ (\id+\xi n) J_\tau(\xi).
     \end{aligned}
\end{equation*}
Moreover, since the tangential Jacobian $J_\tau$ never vanishes, it is clear that, by definition, $\Psi(\xi,\eta)=0$ if and only if $v_{\xi,\eta}$ satisfies \eqref{what we want}.

In what follows, we will apply the implicit function theorem (Theorem \ref{ift}) to the mapping $\Psi: C_*^{2,\al}(\pa\Om_2)\times C^{2,\al}(\pa\Om_1)\to C_*^{1,\al}(\pa\Om_2)$. To this end, it will be sufficient to show that the partial Fr\'echet derivative of $\Psi$ with respect to $\xi$ at $\xi=0$ is a bijection between $C_*^{2,\al}(\pa\Om_2)$ and $C_*^{1,\al}(\pa\Om_2)$. A direct computation with \eqref{u_0}, \eqref{v_0}, \eqref{v' eq} and \eqref{w xi} at hand yields:
\begin{equation*}
\begin{aligned}
       \pa_\xi \Psi(0,0)[\xi] &= \Big(\underbrace{\pa_n u_0-\sg_3 \pa_n v_0}_{=0}  \Big) J_\tau'(0)[\xi] 
+ \bigg(\pa_n v'[\xi] + \Big( \underbrace{(\pa_n)^2 u_0 - \sg_3 (\pa_n)^2 v_0}_{=0} \Big)\xi       \bigg) \underbrace{J_\tau(0)}_{=1}\\
&=\pa_n v'[\xi]= \frac{1}{N} \left(1-\frac{1}{\sg_3}\right)\cN(\xi).
\end{aligned}
\end{equation*}
We remark that the computation above is also dramatically simplified  because the Fr\'echet derivative of $n_\xi\circ (\id+\xi n)$ at $\xi=0$ is tangent to $\pa\Om_2$, that is, orthogonal to $n$ (as a matter of fact, it is equal to $-\gr_\tau \xi$ by \cite[Proposition 5.4.14]{HP2018}). 
Finally, since $\sg_3\ne \sg_2=1$, the conclusion follows from Lemma \ref{N is a bijection}.
\end{proof}

\section{Some final comments}\label{s6}
In this section, we give some comments on the various topological and regularity assumptions used in this paper. 
\subsection*{On the topological assumptions in Theorem \ref{thm bessatsu}}
Theorem \ref{thm bessatsu} of page \pageref{thm bessatsu} ensures spherical symmetry in a two-phase setting under the topological assumption that $\Om_1$ has finitely many connected components and $D_1:= \Om_2\setminus\ol\Om_1$ is connected.  
Notice that, by Theorem \ref{mainthm II}, we know that the connectedness of $D_1$ is necessary. 
Indeed, since Theorem \ref{mainthm II} holds in the ``two-phase-three-layer" case (that is, $m=3$ and $\sg_1=\sg_3$), there exists a non-trivial triplet of domains $D_\eta\subset\subset \Om_\xi \subset\subset \Om_3$ such that \eqref{infinite overdetermination} holds. Renaming the domains as 
\begin{equation*}
    \Om_2:=\Om_3, \quad \Om_1:= \Om_\xi\setminus\ol D_\eta,\quad D_2:= \Om_2\setminus \ol \Om_1 = D_\eta \cupdot (\Om_3\setminus \ol \Om_\xi)
\end{equation*}
gives the desired counterexample to Theorem \ref{thm bessatsu} for disconnected $D_2=\Om_2\setminus\ol\Om_1$. 
To the best of my knowledge, it is still an open question whether there exists a counterexample to Theorem where $\Om_1$ has infinitely many connected components without developing a microstructure. 

\subsection*{On the regularity}
In Theorem \ref{mainthm II} we constructed a pair of nontrivial domains $D_\eta\subset\subset \Om_\xi$ of class $C^{2,\al}$ such that \eqref{infinite overdetermination} is satisfied. We remark that this particular choice of regularity has been made only to simplify the exposition. Indeed, one could have chosen higher regularity spaces as $C^{k,\al}$ ($k\ge 3$) in \eqref{Psi} or even different regularities altogether for $\eta$ and $\xi$. The latter can be done by the ``simultaneous asymmetric perturbation method" introduced in \cite{cava sap}

\subsection*{On the choice of the mapping $\Psi$}
We remark that the choice of the mapping $\Psi$ used in the proof of Theorem \ref{mainthm II} is not by chance. The reader might wonder why we opted for such a convoluted approach (cutting the solution at the second phase, deforming it, and then gluing it back together with the radial unperturbed solution) instead of the more direct approach given by a simple Neumann-tracking on $\pa\Om_m$. In what follows, we aim to give an intuitive explanation of why such a naive method fails. 
Instead of the one defined in \eqref{Psi}, let $\Psi$ be the following Neumann-tracking type operator: 
\begin{equation*}
\begin{aligned}
 \Psi:\; & C_*^{2,\al}(\pa\Om_2)\times C^{2,\al}(\pa\Om_1) \longrightarrow C_*^{k,\al}(\pa\Om_m),\\
&(\xi, \eta) \mapsto \pa_n u_{\xi,\eta}-c_1,
\end{aligned}
\end{equation*}
where $u_{\xi,\eta}$ is the solution to \eqref{bvp} with respect to \eqref{nontrivial domains}.
First of all, notice that, by the Schauder regularity theory, $u_{\xi,\eta}$ is of class $C^{k,\al}$ (for all $k\ge 2$) in a neighborhood of $\pa\Om_m$ and so the map $\Psi$ is well defined.
Accordingly, one has to replace the Dirichlet-to-Neuman map $\cN$ with the following ``jump-to-Neumann" map $\xi\mapsto \cJ(\xi):=\pa_n w[\xi]$, where $w[\xi]$ is the solution to the following transmission problem:  
\begin{equation*}
    \begin{cases}
        -\De w = 0 \quad \tin D_1 \cup \dots \cup D_m,\\
        \jump{w}=\jump{\sg \pa_n w}=0 \quad \ton \pa \Om_i\; (i=1,3,4,\dots, m-1),\\
        \jump{w}=\xi \quad \ton \pa\Om_2,\\
        \jump{\sg \pa_n w}=0 \quad \ton \pa\Om_2,\\
        w=0 \quad \ton \pa\Om_m.
    \end{cases}
\end{equation*}
As briefly mentioned before, notice that the function $\pa_n w[\xi]$ is arbitrarily smooth, irrespective of the regularity of $\xi$. In other words, in passing from $\xi$ to $\cJ(\xi)$, all information about the regularity of $\xi$ gets lost and thus solving the equation 
\begin{equation*}
    \cJ(\xi):=\pa_n w[\xi] =\eta
\end{equation*}
in the appropriate Banach spaces becomes an ill-conditioned problem. As a result, Fredholmness is lost and the proofs of the analogs of Lemmas \ref{id+N is a bijection}--\ref{N is a bijection} fail. 
As a rule of thumb, we can say that this sort of ill-conditioning usually happens when the ``free boundary" (in this case $\pa\Om_\xi$) and the ``overdetermined boundary" (that is, the boundary where the tracking takes place, in this case,  $\pa\Om_m$) do not coincide. 

\begin{small}

\end{small}

\bigskip
\bigskip

\noindent
\textsc{
Mathematical Institute, Graduate School of Science, Tohoku University, Aoba-ku, 
Sendai 980-8578, Japan}\\
\noindent
{\em Electronic mail address:}
cavallina.lorenzo.e6@tohoku.ac.jp

\end{document}